\newcommand{\op}[1]{\operatorname{#1}}%
\newcommand{\hil}{X} %
\newcommand{\strradtext}{1/(8r)}%
\newcommand{\radbig}{r} %
\newcommand{\radsmall}{s} %
\newcommand{\ball}{\overline B}
\newcommand{\quot}[1]{\frac{\delta_\Omega(#1)}{{#1}^2}}     %
\newcommand{\Quot}[1]{{\delta_\Omega(#1)}/{{#1}^2}}
\newcommand{\Quott}[2]{{\delta_{{#1}}(#2)}/{{#2}^2}}
\newcommand{\Quotto}[2]{{\delta^\circ_{{#1}}(#2)}/{{#2}^2}}
\newcommand{\eps}{\varepsilon}
\newcommand{\unitvector}{v \in \partial B(0,1)} 
\newcommand{\arc}[1]{A_{(#1)}}
\newcommand{\set}[2]{\{#1 \, | \, #2\} }
\newcommand{\fracquot}[1]{1/(#1)}
\newcommand{\sline}[1]{\{ \lambda #1 \, | \, \lambda \in \R\}}
\newcommand{\point}{p}
\newcommand{\indexx}{i}
\newcommand{\gll}{\begin{equation*}} 
\newcommand{\gle}{\end{equation*}}   
\newcommand{\R}{\mathbb{R}}
\newcommand{\N}{\mathbb{N}}
\newcommand{\fol}{\Rightarrow}
\newcommand{\tfae}{the following conditions are equivalent}
\theoremstyle{plain}
\newtheorem{Satz}{Theorem}
\numberwithin{Satz}{section}
\newtheorem{Cor}[Satz]{Corollary}
\newtheorem{Prop}[Satz]{Proposition}
\newtheorem{Lemma}[Satz]{Lemma}
\theoremstyle{definition}
\newtheorem*{Definition}{Definition}
\journal{Journal of Mathematical Analysis and Applications} %
\begin{document}
\begin{frontmatter}
\title{Local characterization of strongly convex sets\tnoteref{version{},DFG{}}}
\tnotetext[version{}]{This is the accepted version of a paper published in \textit{Journal of Mathematical Analysis and Applications}, vol. 400, no. 2, 2013, pp. 743-750, \href{http://dx.doi.org/10.1016/j.jmaa.2012.10.071}{DOI:10.1016/j.jmaa.2012.10.071}}
{}\tnotetext[DFG{}]{This work has been supported by the German Research Foundation (DFG) under grant no. RE 1249/3-1.} %
\author[UniBwM]{Alexander~Weber\corref{cor1}}
\ead{A.Weber@unibw.de}
\author[UniBwM]{Gunther~Rei\ss ig} 
\ead[url]{http://www.reiszig.de/gunther/} 
\address[UniBwM]{Department of Aerospace Engineering, Institute of Control Engineering, Universit\"at der Bundeswehr M\"unchen, Werner-Heisenberg-Weg 39, 85577 Neubiberg, Germany}
{}\cortext[cor1]{Corresponding author}
\begin{abstract}
Strongly convex sets in Hilbert spaces are characterized by local %
properties. One quantity which is used for this purpose is a %
generalization of the modulus of convexity $\delta_\Omega$ %
of a set $\Omega$. We also show that $\lim_{\eps \to 0} \Quot{\eps}$ exists whenever $\Omega$ is closed and convex.
\end{abstract}
\begin{keyword}
Strongly convex set \sep Modulus of convexity \sep Strong convexity \sep %
Convexity \sep Hilbert space \MSC[2010] 52A30 \sep 46C05
\end{keyword}
\end{frontmatter}
\makeatletter
\def\refstepcounter#1{\stepcounter{#1}%
    \protected@edef\@currentlabel
       {\csname p@#1\endcsname{\csname the#1\endcsname}}%
}
\makeatother
\section{Introduction}
\label{s:intro}
Tietze and Nakajima were the first to obtain local characterizations of the convexity of subsets of a finite-dimensional Euclidean space \cite{Tietze1928,Tietze1929,Nakajima1928}. Their results have been extended to infinite dimensions as well as to various generalizations of the notion of convexity; see \cite{Valentine64,VanDeVel93,PolovinkinBalashov04} and the references therein. In the present paper, we present a novel local characterization of one such generalization in a Hilbert space setting in which the role of line segments in classical convexity is assumed by \textit{lenses} \cite{Blanc43}, as detailed below. 

Here and throughout, $X$ denotes a real Hilbert space of dimension at
least 2 endowed with inner product $\innerProd{\cdot}{\cdot}$ and norm
$\| \cdot \|$, and $\ball(c,r)$ is the closed ball of radius $r$
centered at $c \in X$, where we adopt the convention that $\ball(c,0)=\{c\}$.
\begin{Definition}
Let $r>0$. A set $\Omega \subseteq X$ is called \begriff{$r$-convex} if 
\begin{equation}
\bigcap_{x,y \in \ball(c,r)} \ball(c,r) \subseteq \Omega
\label{linse}
\end{equation}
for all $x,y \in \Omega$, and $\Omega$ is called \textit{strongly convex} if $\Omega$ is $s$-convex for some $s>0$. The intersection in \ref{linse} is taken over
all balls containing $x$ and $y$ and is considered equal to $X$ if no
such ball exists.
\end{Definition}
Early results of Mayer %
imply the following analogues of the above-mentioned results of Tietze %
and Nakajima \cite{Mayer35}; see Section \ref{s:MayersTheorems}.
\begin{Satz}
\label{rconvexweaklyrconvex}
Let $r > 0$ and $\Omega \subseteq X$ be closed and connected. Then the
set $\Omega$ is $r$-convex if and only if it is locally $r$-convex, by which we
mean that for each point $x \in \Omega$ there exists a neighborhood
$U \subseteq X$ of $x$ such that $U \cap \Omega$ is $r$-convex.
\end{Satz}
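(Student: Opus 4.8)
The forward implication is trivial: if $\Omega$ is $r$-convex then for each $x \in \Omega$ one may take the neighborhood $U = \hil$, since $U \cap \Omega = \Omega$. So I will concentrate on the converse, which I would prove by adapting the classical Tietze--Nakajima argument, treating first the convexity of $\Omega$ and then the genuinely ``$r$-convex'' part. Write $L(x,y) := \bigcap_{x,y \in \ball(c,r)} \ball(c,r)$ for the \emph{lens} of $x,y$ (so $L(x,y) = \hil$ when $\|x-y\| > 2r$). I would begin by recording two elementary facts used throughout: since balls are convex, $[x,y] \subseteq L(x,y)$, so every $r$-convex set is convex and in particular $\Omega$, being locally $r$-convex, is locally convex; and the \emph{nesting property} --- if $u,v \in L(x,y)$ then $L(u,v) \subseteq L(x,y)$, because every ball through $x,y$ already contains $L(x,y)$, hence contains $u,v$, hence contains $L(u,v)$. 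From local convexity, closedness and connectedness, $\Omega$ is (polygonally) path-connected; combined with local $r$-convexity and a Lebesgue-number argument along a path, any two points of $\Omega$ are joined by a finite ``lens-chain'' $x = z_0, \dots, z_m = y$ in $\Omega$ with $L(z_i, z_{i+1}) \subseteq \Omega$ for all $i$. The goal is to upgrade the existence of such a chain to $L(x,y) \subseteq \Omega$ itself.

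\emph{Convexity of $\Omega$.} Since $\Omega$ is closed, connected and locally convex, it is convex; this is Tietze's theorem, which I would reprove if necessary by the standard connectedness argument (for fixed $a \in \Omega$ the set $\{x \in \Omega \mid [a,x] \subseteq \Omega\}$ is nonempty and closed in $\Omega$, and running a segment through a chain of overlapping convex neighborhoods shows it is also open in $\Omega$, hence all of $\Omega$; as $a$ is arbitrary, $\Omega$ is convex). One then disposes of degenerate cases: $\emptyset$, $\hil$, and a single point are trivially $r$-convex, while any other closed convex locally $r$-convex set has nonempty interior (otherwise a lens of two of its points would be a full-dimensional body lying in a proper affine subspace) and has connected boundary (local $r$-convexity excludes flat boundary pieces, hence the ``slab''-type obstructions to connectedness of $\partial\Omega$). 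So it remains to treat a nonempty proper closed convex $\Omega$ with $\op{int}\Omega \neq \emptyset$ and $\partial\Omega$ connected.

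\emph{From convexity to $r$-convexity.} Here I would argue via supporting balls. Local $r$-convexity at $p \in \partial\Omega$ yields a closed radius-$r$ ball $B_p$ with $p \in \partial B_p$ and $\Omega \cap V \subseteq B_p$ for some neighborhood $V$ of $p$ (a lens-convex set is locally supported from outside by radius-$r$ balls at its boundary points, and at a corner one chooses the supporting direction in the normal cone). The key step is to promote this to the \emph{global} statement $\Omega \subseteq B_p$, by a connectedness argument on $\partial\Omega$: fixing $p_0$, the set of $q \in \partial\Omega$ having a neighborhood $W$ with $\Omega \cap W \subseteq B_{p_0}$ is nonempty and open in $\partial\Omega$, and it is also closed --- at a limit point $q$, either $q \in \op{int}B_{p_0}$ (immediate) or $q \in \partial B_{p_0}$, in which case the radius-$r$ ball furnished by local $r$-convexity at $q$ must coincide with $B_{p_0}$ (both are radius-$r$ balls tangent at $q$, from the same side, to a common supporting hyperplane of $\Omega$). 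As $\partial\Omega$ is connected, this set is all of $\partial\Omega$, so $\Omega \subseteq B_{p_0}$. Applying this at $p = \pi_\Omega(z)$ for each $z \notin \Omega$, with the supporting ball chosen tangent to the separating hyperplane produced by nearest-point projection, one gets $\Omega = \bigcap\{B_p \mid p \in \partial\Omega\}$, so $\Omega$ is an intersection of closed radius-$r$ balls and hence $r$-convex; in particular $\op{diam}\Omega \le 2r$, so the case $\|x-y\| > 2r$ never arises.

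\emph{Main obstacle.} The hard part is the local-to-global promotion of the supporting ball (equivalently, the ``openness'' step in the convexity argument): as in Tietze--Nakajima, the set of pairs $(x,y)$ with $L(x,y) \subseteq \Omega$ is \emph{not} open in $\Omega \times \Omega$ (because $\Omega$ is not open), so a naive clopen argument fails and the closedness assertion has to be extracted from matching the relevant radius-$r$ balls at limit boundary points --- which forces one to control non-smooth boundary points (where the tangent ball need not be unique) and, more seriously in a Hilbert space, to cope with the lack of local compactness in the covering and Lebesgue-number steps, possibly by invoking weak compactness or by reducing to suitable finite-dimensional sections. By contrast the nesting property of lenses, the lens-chain construction, and the passage from ``$\Omega$ is an intersection of radius-$r$ balls'' to ``$\Omega$ is $r$-convex'' are routine. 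I expect that this is precisely the technical content that the reduction to Mayer's theorems packages away.
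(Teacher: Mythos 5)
Your easy direction and the convexity step (Tietze--Nakajima applied to the locally convex, closed, connected set $\Omega$) agree with the paper. The genuine gap is exactly the step you flag as the ``main obstacle'': the local-to-global promotion of the supporting ball. In your clopen argument on $\partial\Omega$, the closedness of the set $S$ of points $q$ admitting a neighborhood $W$ with $\Omega\cap W\subseteq B_{p_0}$ requires that, at a limit point $q\in\partial B_{p_0}$ of $S$, the radius-$r$ ball furnished by local $r$-convexity at $q$ coincide with $B_{p_0}$. Your justification --- both balls are tangent at $q$ ``to a common supporting hyperplane of $\Omega$'' --- is circular: that the tangent hyperplane of $B_{p_0}$ at $q$ supports $\Omega$ near $q$ is essentially the statement $q\in S$ you are trying to establish. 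Knowing only that $q_n\to q$ with $\Omega\cap W_n\subseteq B_{p_0}$ does not yield it, because the neighborhoods $W_n$ carry no uniform lower bound on their size, so a point of $\Omega$ lying just outside $B_{p_0}$ and arbitrarily close to $q$ need not belong to any $W_n$. (The connectedness of $\partial\Omega$ and the exclusion of slab-like sets are likewise asserted rather than proved, though these are lesser issues.) As written, the proposal correctly isolates the hard step but supplies no argument for it.

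The paper closes this step by a different, compactness-free device. Via Proposition \ref{Reissigprop}, $r$-convexity is reformulated as the arc property: every short arc of radius $r$ joining two points of $\Omega$ lies in $\Omega$. If some arc $A$ joining $x,y$ escapes $\Omega$, one takes $z$ to be the farthest point of $\Omega$ on the perpendicular segment from the chord midpoint toward the arc midpoint and shows, using the convexity of $\Omega$ already established, that at least one of the pairs $(x,z)$, $(y,z)$ again fails the arc property. Iterating this bisection as in the proof of Proposition \ref{p:globalchar} produces pairs $(p_n,q_n)$, all failing the arc property, with $p_n,q_n\to a\in\Omega$; this contradicts local $r$-convexity at the single point $a$. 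The failure is thus localized directly, with no supporting balls, no connectedness of $\partial\Omega$, and no Lebesgue-number or weak-compactness considerations. If you prefer to keep your supporting-ball route, the honest way to finish is the one the paper uses for Corollary \ref{xarcporp}: pass to $\op{int}\Omega$ and invoke Theorem \ref{rconvsphersupp} for open connected sets, whose proof (a homotopy of arcs $A_t$ sweeping from the chord to the full arc, with the contradiction at a boundary point coming from the arc's radius exceeding $r$) is where the local-to-global content you are missing actually resides.
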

\begin{Satz}
\label{rconvsphersupp}
Let $r > 0$ and $\Omega \subseteq X$ be open and connected. Then the
set $\Omega$ is $r$-convex if and only if it is spherically supported with radius
$r$ at each of its boundary points locally, by which we mean that for
each boundary point $x \in \partial \Omega$ there exists a
neighborhood $U \subseteq X$ of $x$ and some $v \in X$, $\| v \| = 1$,
such that $U \cap \Omega \subseteq \ball(x - r v,r)$.
\end{Satz}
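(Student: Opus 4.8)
The plan is to prove the two implications separately. Both are trivial when $\Omega=X$, so assume $\Omega\ne X$; then $r$-convexity forces $\operatorname{diam}\Omega\le 2r$, since two points at distance $>2r$ lie in no ball of radius $r$ and hence have lens equal to $X$. Write $L(x,y)$ for the lens of $x$ and $y$, i.e.\ the intersection in \ref{linse}; as every ball of radius $r$ through $x$ and $y$ contains the segment $[x,y]$, we have $L(x,y)\supseteq[x,y]$, so in particular every $r$-convex set is convex.

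For ``$r$-convex $\Rightarrow$ locally spherically supported'': being open and convex, $\Omega$ has at each $p\in\partial\Omega$ a supporting half-space $H_p^-=\{t\in X\mid\langle t-p,v\rangle\le0\}\supseteq\Omega$ with $\|v\|=1$, and I claim the stronger statement $\Omega\subseteq\ball(p-rv,r)$, which gives the assertion with $U=X$. If this fails, some $y\in\Omega$ satisfies $\|y-p\|^{2}>2r\langle p-y,v\rangle$, where necessarily $\langle p-y,v\rangle>0$ since the supporting hyperplane meets the open set $\Omega$ nowhere. For small $t>0$ the point $p_t:=p+t(y-p)$ lies in $\Omega$, so $L(p_t,y)\subseteq\Omega\subseteq H_p^-$. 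But a direct computation identifies the tangent cone of $L(p_t,y)$ at $p_t$ with the solid cone of revolution about $u:=(y-p)/\|y-p\|$ of half-angle $\arcsin\!\bigl((1-t)\|y-p\|/(2r)\bigr)$; since the displayed inequality is strict and $\|y-p\|\le 2r$, for all sufficiently small $t$ this cone contains a direction along which one may move from $p_t$ into $L(p_t,y)$ far enough to leave $H_p^-$ — contradicting $L(p_t,y)\subseteq H_p^-$. Hence $\Omega\subseteq\ball(p-rv,r)$, so $\Omega$ is even globally spherically supported.

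For the converse, let $\Omega$ be open, connected, and locally spherically supported with radius $r$ at every boundary point. As $\ball(p-rv,r)\subseteq H_p^-$, the hypothesis yields a local supporting half-space at every $p\in\partial\Omega$, so by a Tietze--Nakajima-type theorem for open connected sets (see \cite{Tietze1928,Nakajima1928}) $\Omega$ is convex; it then remains to promote convexity to $r$-convexity. Fix $x,y\in\Omega$; as $L(x,y)$ is convex, hence connected, contains $x\in\Omega$, and $\Omega$ is open, it suffices to exclude a point $z\in L(x,y)\cap\partial\Omega$. The natural attempt uses the local support ball $\ball(z-rv_z,r)$ at $z$: pushing its centre slightly further from $z$, to $z-(r+\eps)v_z$, produces a ball of radius $r$ containing $x$ and $y$ but not $z$, contradicting $z\in L(x,y)$ — provided that support ball contains $x$ and $y$.

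That proviso is the crux, and I expect it to be the main obstacle, for it is a local-to-global statement which does \emph{not} follow from convexity alone: for a merely convex set a local support ball of radius $r$ at a boundary point need not contain the whole set, as one sees along a flat boundary face — where, incidentally, the local support condition itself already fails. One must instead use the hypothesis at all boundary points simultaneously, together with connectedness, in the spirit of Tietze, Nakajima and Mayer: one first shows that $\Omega$ is \emph{locally} $r$-convex (each point has a neighbourhood $V$ with $V\cap\Omega$ $r$-convex) — here the ball-perturbation argument above applies, using the metric projection onto the closed convex set $\overline\Omega$ of an offending exterior point to locate the relevant boundary point, and it closes up once that point, $a$ and $b$ lie close enough together — and then passes from local to global $r$-convexity by the open-set form of Theorem~\ref{rconvexweaklyrconvex}, which is available from the results of Mayer \cite{Mayer35}. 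Two technical points need care: corners of $\partial\Omega$, where the metric projection of an exterior point carries an entire normal cone, so that the supporting ball must be chosen compatibly; and the possible non-uniformity of the neighbourhoods furnished by the hypothesis, which a compactness argument along segments handles. By comparison, the forward implication is routine once the tangent-cone identity is in hand.
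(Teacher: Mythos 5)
Your forward implication is fine in substance: for $\Omega\neq X$, $r$-convexity forces $\op{diam}\Omega\le 2r$, and the tangent-cone computation for the lens at $p_t$ is a legitimate (if unexpanded) way to show that a supporting half-space at $p\in\partial\Omega$ upgrades to $\Omega\subseteq\ball(p-rv,r)$; this is in effect Lemma \ref{LemmaMayer} combined with Proposition \ref{p:globalchar}. The converse, however, which is the substance of Theorem \ref{rconvsphersupp}, is not proved: what you give is a strategy whose two load-bearing steps are left open, and you name the obstacles without overcoming them. First, the claim that a ``ball-perturbation argument'' yields local $r$-convexity does not close up as described. Given $a,b\in\Omega$ near $p$ and $z\in L(a,b)\setminus\Omega$, the metric projection $w$ of $z$ onto $\op{cl}\Omega$ is a boundary point over which you have no control: the neighborhood $U_w$ furnished by the hypothesis may be far smaller than $\|w-a\|$, so you cannot place $a$ and $b$ inside $\ball(w-rv_w,r)$; and the support direction $v_w$ need not be the projection direction $(z-w)/\|z-w\|$, so the perturbed ball need not exclude $z$. ``Once $a$, $b$ and that point lie close enough together'' is circular, since the required closeness depends on $w$, which depends on $a,b,z$; a compactness argument along the segment $[a,z]$ does not remove this, because the neighborhoods vary with the boundary point and no uniformity is available. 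Second, the appeal to an ``open-set form of Theorem \ref{rconvexweaklyrconvex}'' invokes a result not established here (Theorem \ref{rconvexweaklyrconvex} is stated and proved for closed sets), and the passage via $\op{cl}\Omega$ is itself nontrivial: one must verify that a neighborhood trace of a convex set can be chosen lens-closed and that closures of locally $r$-convex traces remain $r$-convex.

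The paper's proof avoids all of this by a direct connectedness argument that makes the local hypothesis do exactly one, purely local, job. After obtaining convexity from Tietze's theorem, it deforms the chord $\op{conv}\{x,y\}$ through the one-parameter family of arcs $A_t$ of radius decreasing from $\infty$ to $r$ and shows that $\mathcal{T}=\{t:A_t\subseteq\op{cl}\Omega\}$ is open and closed in $[0,1]$. Openness is where the hypothesis enters: if $A_t$ ($t<1$) met $\partial\Omega$ at $w$, then near $w$ the arc would lie in $\ball(w-rv,r)$ while passing through the boundary point $w$ of that ball, which is impossible for an arc of radius strictly greater than $r$. No uniformity of the neighborhoods, no local-to-global theorem, and no metric projection are needed; the conclusion $A_1\subseteq\op{cl}\Omega$ then gives $r$-convexity of $\op{cl}\Omega$ (via Proposition \ref{Reissigprop}) and hence of $\Omega=\op{int}(\op{cl}\Omega)$. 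I would encourage you to either adopt such a homotopy argument or supply complete proofs of your two missing steps; as it stands the converse direction has a genuine gap.
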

The following related result has recently been established by Balashov
and Repov{\v{s}} \cite{BalashovRepovs11b}:
A closed, convex and bounded subset $\Omega$ of the Hilbert space $X$
is $r$-convex, $r > 0$, if and only if there exists some $\varepsilon > 0$ such
that condition \ref{linse} holds for all points $x,y \in \Omega$
whose distance does not exceed $\varepsilon$.
Moreover, the property of strong convexity of a set $\Omega$ has also
been characterized in terms of its \textit{modulus of convexity}
$\delta_\Omega$ \cite{Polyak66},
\begin{equation}
\label{e:deltaOmega}
\delta_\Omega( \varepsilon )
=
\sup
\big \{
\delta \geq 0
\; \big |\;
x, y \in \Omega, \|x - y\| = \varepsilon
\Longrightarrow
\ball((x+y)/2,\delta) \subseteq \Omega
\big \},
\ \ \
0 \leq \varepsilon < \infty.
\end{equation}
Specifically, if $\Omega \subseteq X$ is a non-empty, closed and convex set for which the limit
\begin{equation}
\label{e:BR11Limit}
\lim_{\genfrac{}{}{0pt}{2}{\varepsilon \to 0}{\eps>0}}
\delta_{\Omega}(\varepsilon)
/
\varepsilon^2
\end{equation}
exists and is positive, then $\Omega$ is $r$-convex, $r>0$, if and only if the
limit is not less than $1/(8r)$
\cite{BalashovRepovs11b}.
These results allow us to test the strong convexity of a set by
verification of conditions on points $x$ and $y$ that are sufficiently
close. Still, the former characterization is not purely local since
the value of $\varepsilon$ is required to be independent of the
location of the points $x$ and $y$ in $\Omega$. (Here, we call a characterization \begriff{local} if it takes the following form: For all $x \in \Omega$ there exists a neighborhood $U$ of $x$ such that $U \cap \Omega$ satisfies a certain condition.) Analogously, the
limit \ref{e:BR11Limit} is required to show a kind of uniformity which, in
view of Theorem \ref{rconvexweaklyrconvex}, seems to be unnecessarily
restrictive. Indeed, the existence of the limit \ref{e:BR11Limit} is
open even for strongly convex sets $\Omega$.

In the present paper, we extend the results of Balashov and Repov{\v{s}} in that we establish a local characterization of
the property of strong convexity in terms of a
suitable generalization of the modulus \ref{e:deltaOmega}.
We also show that the limit \ref{e:BR11Limit} exists whenever $\Omega$ is closed and convex. 

Strongly convex sets appear several times in applications; see \cite{Plis75,IvanovPolovinkin95,BalashovRepovs09} and the references therein.
Our research has been particularly motivated by the fact that local characterizations of strong convexity are indispensable for strengthening recent results in control theory which concern geometric properties of image sets of time-1 maps of ordinary differential equations \cite{i07Convex,i11abs}.
\section{The main results}
\label{s:results}
The statement of our results will involve the following generalization of the modulus \ref{e:deltaOmega}.
\begin{Definition}
Let $\Omega \subseteq \hil$ and $x,y \in \Omega$. The formula
\begin{align*}
\delta_{\Omega,x}^\circ(\eps)
& :=
\sup \big \{ \delta \geq 0 \; \big | \; y \in \Omega, \
v \in \hil, \ \|v\| =1, 
\  \|x-y\|=\eps, \ \innerProd{v}{x-y} = 0  \Longrightarrow {(x+y)}/{2} +\delta \cdot v \in \Omega \big \}
\end{align*} 
defines a map $\delta_{\Omega,x}^\circ$ on the nonnegative real numbers, where
we have adopted the convention that $\sup \emptyset =- \infty$.
\end{Definition} 
The following are the main results of this paper.
\begin{Satz}
The limit \ref{e:BR11Limit} exists (in $\R$) for every closed and convex subset $\Omega \subseteq X$, $\Omega \neq X$.
\label{existlim}
\end{Satz}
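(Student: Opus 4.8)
The plan is to show that the function $\eps \mapsto \Quot{\eps}$ is monotone in a suitable sense on the range of $\eps$ where $\delta_\Omega(\eps) \geq 0$ (i.e. where lenses of diameter $\eps$ fit into $\Omega$), so that the limit as $\eps \to 0^+$ exists as a supremum (possibly $+\infty$), and then to rule out the value $+\infty$ using that $\Omega \neq X$. The key geometric fact I would isolate first is a \emph{monotonicity lemma}: if $0 < s \leq r$ and $\Omega$ is $r$-convex, then $\Omega$ is $s$-convex; equivalently, being $r$-convex is preserved under shrinking $r$. This follows directly from the definition, since a smaller ball of radius $s$ through $x,y$ is contained in the intersection of all larger balls through $x,y$; more precisely every ball of radius $s$ containing $x,y$ is contained in some ball of radius $r$ containing $x,y$, so the lens for radius $s$ is contained in the lens for radius $r$.

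Next I would translate this into a statement about the quotient $\Quot{\eps}$. The connection, essentially due to Polyak and used by Balashov--Repov\v{s}, is: $\Omega$ is $r$-convex (for a closed convex $\Omega$) if and only if $\delta_\Omega(\eps) \geq r - \sqrt{r^2 - \eps^2/4}$ for all $\eps \in [0, 2r]$ — the right-hand side being the sagitta (the depth of a circular cap) of a ball of radius $r$ over a chord of length $\eps$. Taylor expansion gives $r - \sqrt{r^2 - \eps^2/4} = \eps^2/(8r) + O(\eps^4)$ as $\eps \to 0$. Define $f(\eps) := \Quot{\eps}$. I would argue that for each fixed small $\eps_0 > 0$ with $\delta_\Omega(\eps_0) > 0$, the set $\Omega$ locally looks $r$-convex for $r$ determined by the cap condition at $\eps_0$, which forces a lower bound on $f(\eps)$ for all $\eps < \eps_0$ of the form $f(\eps) \geq \frac{1}{8r} - C\eps^2$ for an explicit $r = r(\eps_0)$. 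Combined with an upper bound $f(\eps) \leq \frac{1}{8r'}$ coming from the other direction, one gets that $\limsup$ and $\liminf$ of $f$ agree.

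The cleaner route, which I would actually pursue, is to show directly that $\eps \mapsto \delta_\Omega(\eps)$ is concave (or at least that a suitable reparametrization is), or that $g(\eps) := $ the largest $r$ such that the cap of radius $r$ over a chord of length $\eps$ has sagitta $\leq \delta_\Omega(\eps)$ is monotone; monotonicity of such a "best fitting radius at scale $\eps$" then yields that $8 r(\eps) \cdot f(\eps) \to 1$ is the wrong normalization and instead $f(\eps)$ itself converges. Concretely: for $\eps_1 < \eps_2$ both admissible, any lens-fitting datum at scale $\eps_2$ restricts (by the nesting of balls) to give a lens-fitting datum at scale $\eps_1$ with at least the expected sagitta, and carefully bookkeeping the sagitta formula shows $\delta_\Omega(\eps_1)/\eps_1^2 \geq \delta_\Omega(\eps_2)/\eps_2^2$ up to a correction that vanishes as $\eps_2 \to 0$ — hence $f$ has a limit. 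Finally, $\Omega \neq X$ and closed convex means $\Omega$ is supported by a hyperplane at some boundary point, which caps $\delta_\Omega(\eps) \leq $ (distance across $\Omega$ in some direction), forcing $f(\eps)$ to be bounded below $+\infty$ — indeed if $\Omega$ lies in a half-space then for the midpoint of a chord near the boundary, $\delta_\Omega(\eps)$ can be no larger than roughly the in-radius, so $f(\eps) = \Quot{\eps} \to$ a finite limit.

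The main obstacle I expect is making the monotonicity-with-correction estimate precise: controlling the error term in the sagitta expansion uniformly, and handling the possibility that $\delta_\Omega(\eps) = 0$ or $= -\infty$ for $\eps$ not small (where no chord of length $\eps$ exists in $\Omega$, e.g. $\Omega$ bounded) — this is harmless since we only care about $\eps \to 0^+$, but the argument must be phrased so that the monotone quantity is only asserted on the interval of admissible $\eps$. A secondary subtlety is the infinite-dimensional setting: closedness plus convexity still gives supporting hyperplanes and the reduction to essentially planar computations (working in the 2-plane spanned by $x-y$ and the offset direction $v$), so no new difficulty arises there, but one should check that all suprema defining $\delta_\Omega$ and $\delta^\circ_{\Omega,x}$ behave well, which they do because balls in $X$ restrict to balls in any affine subspace.
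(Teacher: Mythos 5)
There is a genuine gap here, in fact two. First, your ``key geometric fact'' is stated in the wrong direction: $r$-convexity is \emph{not} preserved under shrinking the radius. For a chord of length $\varepsilon\le 2s\le 2r$, the lens of radius $s$ \emph{contains} the lens of radius $r$ (its sagitta $s-\sqrt{s^2-\varepsilon^2/4}$ is \emph{larger} for smaller $s$), so the correct monotonicity is that $s$-convexity implies $r$-convexity for $r\ge s$; for example $\overline B(0,r)$ is $r$-convex but $s$-convex for no $s<r$, since $\delta_{\overline B(0,r)}(\varepsilon)=r-\sqrt{r^2-\varepsilon^2/4}<s-\sqrt{s^2-\varepsilon^2/4}$. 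Your inference ``every $s$-ball containing $x,y$ lies in some $r$-ball containing $x,y$, hence the $s$-lens is contained in the $r$-lens'' is invalid, because the resulting intersection of $r$-balls runs only over the image of that assignment, not over all $r$-balls containing $x,y$.

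Second, and more seriously, the step you defer to ``carefully bookkeeping the sagitta formula'' --- deducing $\delta_\Omega(\varepsilon_1)/\varepsilon_1^2\ \ge\ \delta_\Omega(\varepsilon_2)/\varepsilon_2^2-o(1)$ for \emph{all} $\varepsilon_1<\varepsilon_2$ --- is precisely the hard content of the theorem and does not follow from ``restricting a lens-fitting datum''. A bound on $\delta_\Omega(\varepsilon_2)$ only concerns midpoints of chords of length exactly $\varepsilon_2$, and a chord of length $\varepsilon_1<\varepsilon_2$ need not extend inside $\Omega$ to a chord of length $\varepsilon_2$ with a nearby midpoint; no pointwise restriction produces the bound at scale $\varepsilon_1$, and no a priori monotonicity of $\delta_\Omega(\varepsilon)/\varepsilon^2$ (exact or up to vanishing corrections) is available --- if it were, the whole theorem would be a triviality, whereas its proof occupies most of the paper. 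The actual argument is global rather than monotone: assuming $\kappa:=\liminf<\limsup$, one picks $s$ with $\delta_\Omega(\varepsilon_i)\ge\varepsilon_i^2/(8s)$ along some sequence $\varepsilon_i\to0$ and $1/(8s)>\kappa$, and then Proposition \ref{thm:delta:circ} --- which rests on the sequence-based characterization of Proposition \ref{sequencearcprop} (a chaining, open--closed argument along short arcs) and on the iteration $R_{n+1}=2R_n/(1+R_n/r)$ --- upgrades this countable family of conditions to genuine $s$-convexity of $\Omega$, whence $\delta_\Omega(\varepsilon)\ge s-\sqrt{s^2-\varepsilon^2/4}$ for all $\varepsilon$ and $\kappa\ge 1/(8s)$, a contradiction. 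Your finiteness argument is also too weak: a supporting hyperplane directly yields only $\delta_\Omega(\varepsilon)\le\varepsilon/2$ (a chord emanating from a boundary point has midpoint within $\varepsilon/2$ of the boundary), which gives $\delta_\Omega(\varepsilon)/\varepsilon^2\le 1/(2\varepsilon)\to\infty$; one needs $\delta_\Omega(\varepsilon)=O(\varepsilon^2)$, which the paper obtains by observing that an infinite limit would make $\Omega$ $r$-convex for every $r>0$ and hence, by Proposition \ref{p:globalchar}, a singleton or all of $X$.
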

\begin{Satz}
Let $r>0$ and $\Omega \subseteq \hil$ be closed and connected. Then \tfae:
\begin{asparaenum}[(i)]
\item
\label{localmainthm:i}
{$\Omega$ is $r$-convex.}
\item
\label{localmainthm:ii}
{$\lim_{\varepsilon \to 0,\eps>0} \Quot{\eps} \geq \strradtext$.}
\item
\label{localmainthm:iv}
{$\Omega$ is convex and $\liminf_{\varepsilon \to 0,\eps>0} \Quotto{\Omega,x}{\varepsilon} \geq \strradtext$ for all $x \in \Omega$.}
\end{asparaenum}
\label{localmainthm}
\end{Satz}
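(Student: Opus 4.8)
The plan is to establish the cyclic chain $(\ref{localmainthm:i})\Rightarrow(\ref{localmainthm:iv})\Rightarrow(\ref{localmainthm:ii})\Rightarrow(\ref{localmainthm:i})$, the case $\Omega=X$ being immediate. The elementary fact used throughout is that for $x,y\in X$ with $\|x-y\|=\eps\le 2r$ the intersection in \ref{linse} equals $\{z\in X : \ball(x,r)\cap\ball(y,r)\subseteq\ball(z,r)\}$, that this set contains both the segment $[x,y]$ and the ball of radius $r-\sqrt{r^2-\eps^2/4}$ centred at $(x+y)/2$, and that it is all of $X$ once $\|x-y\|>2r$. Hence an $r$-convex set $\Omega\neq X$ has diameter at most $2r$ and contains $[x,y]$ for all $x,y\in\Omega$, so it is convex; and for $x\in\Omega$ and any admissible pair $(y,v)$ in the definition of $\delta^\circ_{\Omega,x}(\eps)$ with $\eps$ small, $r$-convexity places $(x+y)/2+\delta v$ in $\Omega$ for every $\delta\le r-\sqrt{r^2-\eps^2/4}$, whence $\delta^\circ_{\Omega,x}(\eps)\ge r-\sqrt{r^2-\eps^2/4}$ and therefore $\liminf_{\eps\to0}\Quotto{\Omega,x}{\eps}\ge\strradtext$ for every $x$. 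This proves $(\ref{localmainthm:i})\Rightarrow(\ref{localmainthm:iv})$.

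For $(\ref{localmainthm:ii})\Rightarrow(\ref{localmainthm:i})$, note first that $(\ref{localmainthm:ii})$ forces $\delta_\Omega(\eps)>0$ for all sufficiently small $\eps>0$, so $\Omega$ must be locally convex: otherwise there would be points $a,b\in\Omega$ at arbitrarily small distance with $[a,b]\not\subseteq\Omega$, and then the endpoints of a maximal open sub-interval $(a',b')$ of $[a,b]$ lying in $X\setminus\Omega$ would belong to $\Omega$ while $(a'+b')/2$ would not, forcing $\delta_\Omega(\|a'-b'\|)\le 0$ for arbitrarily small $\|a'-b'\|>0$ and contradicting $(\ref{localmainthm:ii})$. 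By the classical local characterisation of convexity in its Hilbert-space form --- the ``$r=\infty$'' analogue of Theorem~\ref{rconvexweaklyrconvex} --- the closed, connected, locally convex set $\Omega$ is convex. Now $\Omega$ is non-empty, closed and convex, so by Theorem~\ref{existlim} the limit in \ref{e:BR11Limit} exists; it is positive by $(\ref{localmainthm:ii})$, so the result of Balashov and Repov\v{s} \cite{BalashovRepovs11b} recalled in Section~\ref{s:intro} applies; as that limit is at least $\strradtext$, $\Omega$ is $r$-convex.

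The main step, and the main obstacle, is $(\ref{localmainthm:iv})\Rightarrow(\ref{localmainthm:ii})$: condition $(\ref{localmainthm:iv})$ controls each $\delta^\circ_{\Omega,x}$ only asymptotically and separately for each $x$, whereas $(\ref{localmainthm:ii})$ concerns $\delta_\Omega$, which quantifies over all pairs of points simultaneously. I would bridge this with a monotonicity property --- of independent interest and closely related to Theorem~\ref{existlim} --- namely that for convex $\Omega$ the maps $\eps\mapsto\Quot{\eps}$ and $\eps\mapsto\Quotto{\Omega,x}{\eps}$ are non-decreasing on their domains; granting it, $(\ref{localmainthm:iv})$ improves to $\delta^\circ_{\Omega,x}(\eps)\ge\eps^2/(8r)$ for \emph{all} $\eps>0$ and all $x\in\Omega$. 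Fixing $x,y\in\Omega$ with $\|x-y\|=\eps$, this uniform bound (together with $\Omega$ being closed) places the sphere of radius $\eps^2/(8r)$ centred at $(x+y)/2$ in the hyperplane through $(x+y)/2$ orthogonal to $x-y$ inside $\Omega$; by convexity the convex hull of $\{x,y\}$ and that sphere --- a ``bicone'' --- lies in $\Omega$, and an elementary computation of the largest ball it contains about $(x+y)/2$ yields
\[
\delta_\Omega(\eps)\ \ge\ \frac{\eps^2}{8r}\Bigl(1+\frac{\eps^2}{16r^2}\Bigr)^{-1/2},
\]
a bound independent of the chosen pair. Hence $\liminf_{\eps\to0}\Quot{\eps}\ge\strradtext$, and since by Theorem~\ref{existlim} the limit exists, it equals this liminf, which is $(\ref{localmainthm:ii})$. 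The crux is the monotonicity lemma: it is exactly what turns the pointwise-in-$x$ control of $(\ref{localmainthm:iv})$ into the uniform bound on $\delta^\circ_{\Omega,x}$, and I expect its proof --- especially for the perpendicular modulus $\delta^\circ_{\Omega,x}$ rather than for $\delta_\Omega$ itself --- to be the technical heart of the argument, since without it one is left with an $x$-dependent threshold on $\eps$ that the comparison with $\delta_\Omega$ cannot absorb.
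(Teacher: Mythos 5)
Your implications $(\ref{localmainthm:i})\Rightarrow(\ref{localmainthm:iv})$ and $(\ref{localmainthm:ii})\Rightarrow(\ref{localmainthm:i})$ are essentially sound (the latter leans on the cited Balashov--Repov\v{s} characterization plus Theorem~\ref{existlim}, where the paper instead proves the equivalence of \ref{localmainthm:i} and \ref{localmainthm:ii} self-containedly via Proposition~\ref{strongconvexityliminf}). The genuine gap is exactly where you flag it: the implication $(\ref{localmainthm:iv})\Rightarrow(\ref{localmainthm:ii})$ rests entirely on the unproven claim that $\varepsilon\mapsto\Quot{\varepsilon}$ and $\varepsilon\mapsto\Quotto{\Omega,x}{\varepsilon}$ are non-decreasing for closed convex $\Omega$. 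This is not a routine lemma one can ``grant'': monotonicity of $\varepsilon\mapsto\Quot{\varepsilon}$ would make $\lim_{\varepsilon\to 0}\Quot{\varepsilon}$ exist trivially (a monotone function bounded below by $0$ has a one-sided limit), whereas the existence of that limit is Theorem~\ref{existlim}, one of the paper's two main results, and the introduction stresses that it was previously open even for strongly convex sets. So your bridging lemma is at least as deep as the theorem it is meant to help prove, and for the pointed modulus $\delta^\circ_{\Omega,x}$ there is even less reason to expect it (the extremal pair $(y,v)$ realizing $\delta^\circ_{\Omega,x}(\varepsilon)$ changes with $\varepsilon$, and nothing forces the normalized infimum to be monotone). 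Without it you are left, as you yourself observe, with an $x$-dependent threshold on $\varepsilon$ that cannot be absorbed into a statement about $\delta_\Omega$, so the cycle $(\ref{localmainthm:i})\Rightarrow(\ref{localmainthm:iv})\Rightarrow(\ref{localmainthm:ii})\Rightarrow(\ref{localmainthm:i})$ is broken at its crucial link. (Your bicone computation downstream of the lemma is correct, but it is conditional.)

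The paper resolves the uniformization problem by a completely different mechanism. First, Proposition~\ref{sequencearcprop} shows that $r$-convexity already follows if the lens-boundary condition holds merely along \emph{one} sequence $\varepsilon_i\to 0$, for all pairs at those distances; this is what absorbs the lack of uniformity in $\varepsilon$. Second, Lemma~\ref{Lemma:delta:circ} converts the hypothesis on $\delta^\circ_{\Omega,x}$ along such a sequence into $2r$-convexity, and Proposition~\ref{thm:delta:circ} then iterates a radius improvement $\radbig_{n+1}=2\radbig_n/(1+\radbig_n/r)\to r$ to reach $r$-convexity; finally Theorem~\ref{rconvexweaklyrconvex} (local-to-global $r$-convexity) and Lemma~\ref{rconv:sequence} handle the remaining localization. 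If you want to repair your argument, you should replace the monotonicity lemma by an argument of this type rather than try to prove monotonicity, which is unlikely to be available in this generality.
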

Given the existence of the limit in \ref{localmainthm:ii}, the latter
condition is the characterization of Balashov and Repov{\v{s}}
\cite{BalashovRepovs11b}. %We want to emphasize that condition \ref{localmainthm:ii} does not %
%trivially imply condition \ref{localmainthm:iineu} by Theorem \ref{existlim} as the convexity %
%of $\Omega$ cannot be obviously concluded from \ref{localmainthm:iineu}. %
%
We remark that the assumption of convexity of $\Omega$ in condition \ref{localmainthm:iv}, which is purely local, cannot be omitted, as the example of two closed balls intersecting at exactly
one point shows.

In the proofs of Theorems \ref {existlim} and \ref{localmainthm} given in Section \ref{s:ProfOfMainResults} we will
make use of auxiliary results to be established in Sections
\ref{s:prelims}, \ref{s:MayersTheorems} and
\ref{s:ProfOfMainResults}. In particular, simple proofs of Theorems
\ref{rconvexweaklyrconvex} and \ref{rconvsphersupp} of Mayer
in Hilbert space are given in Section \ref{s:MayersTheorems}, and the following characterizations from
  Section \ref{s:ProfOfMainResults} may be of independent interest.
\begin{Prop}
Let $r>0$ and $\Omega \subseteq \hil$ be closed and connected.
Then $\Omega$ is $r$-convex if and only if the following holds.
\begin{asparaenum}[(A)]
\item
\label{eq:sequence:arcprop}
{
There exists a sequence $(\eps_i)_{i \in \N}$
of positive real numbers converging to 0 such that for $x,y \in \Omega$ the boundary of the set on the left hand side of \ref{linse}
is contained in $\Omega$ whenever $\| x-y\| = \eps_i$ for some $i \in \N$.
}
\end{asparaenum}
\label{sequencearcprop}
\end{Prop}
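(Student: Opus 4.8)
The ``only if'' direction is immediate: the set on the left-hand side of \ref{linse} --- write $L(x,y)$ for it --- is an intersection of closed balls and hence closed, so if $\Omega$ is $r$-convex then $L(x,y)\subseteq\Omega$, and \emph{a fortiori} $\partial L(x,y)\subseteq\Omega$, for \emph{all} $x,y\in\Omega$; thus \ref{eq:sequence:arcprop} holds, e.g.\ with $\eps_i=1/i$. For the ``if'' direction we first reduce the assertion to a statement about small lenses. By Theorem~\ref{rconvexweaklyrconvex} it suffices to show that $\Omega$ is locally $r$-convex, and since a closed ball $\ball(x_0,\rho)$ with $0<\rho\le r$ is itself $r$-convex --- given $a,b\in\ball(x_0,\rho)$ and $z\notin\ball(x_0,\rho)$, the ball of radius $r$ about $x_0+(r-\rho)(x_0-z)/\|x_0-z\|$ contains $a$ and $b$ but excludes $z$, so $L(a,b)\subseteq\ball(x_0,\rho)$ --- it is enough to produce some $\eta\in(0,2r)$ with
\gll
L(x,y)\subseteq\Omega\quad\text{whenever }x,y\in\Omega\text{ and }\|x-y\|<\eta ;
\gle
one then takes $U=\ball(x_0,\rho)$ with $\rho<\min\{r,\eta/2\}$, because for $a,b\in U\cap\Omega$ we have $\|a-b\|<\eta$, hence $L(a,b)\subseteq\Omega$, and $L(a,b)\subseteq U$ by $r$-convexity of $U$.

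The core of the argument is a filling-in procedure. Fix $i$ with $\eps_i<2r$ and a pair $x,y\in\Omega$ with $\|x-y\|=\eps_i$. The boundary of $L(x,y)$ consists of two spherical caps lying on spheres of radius $r$, and it is contained in $\Omega$ by \ref{eq:sequence:arcprop}. Choose an index $j$ with $\eps_j$ small compared with $\eps_i$. Applying \ref{eq:sequence:arcprop} to the pairs of points of those caps that lie at distance $\eps_j$ from one another yields, inside $\Omega$, a one-parameter family of further radius-$r$ caps whose union is a ``collar'' of $\partial L(x,y)$ towards the interior of the lens, of uniform positive depth comparable to $\eps_j^{2}/(8r)$; the inner rim of this collar is again a union of radius-$r$ caps contained in $\Omega$, so the step can be repeated. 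Since $L(x,y)$ has cross-sectional width comparable to $\eps_i^{2}/(8r)$, after $O\bigl((\eps_i/\eps_j)^2\bigr)$ repetitions the collars exhaust $L(x,y)$, whence $L(x,y)\subseteq\Omega$. Thus the whole lens, not merely its boundary, lies in $\Omega$ whenever $\|x-y\|=\eps_i$ for some $i$.

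It remains to remove the restriction $\|x-y\|=\eps_i$ and to conclude, and here connectedness of $\Omega$ re-enters. Fix $x_0\in\Omega$ and set $A=\{\,y\in\Omega : L(x_0,y)\subseteq\Omega\,\}$. Then $x_0\in A$, and $A$ is closed in $\Omega$ because $y\mapsto L(x_0,y)$ is continuous for the Hausdorff distance on bounded sets --- apart from the degenerate value $\|x_0-y\|=2r$, which forces $\Omega=X$ and hence the conclusion --- while $\Omega$ is closed. The crux is that $A$ is open in $\Omega$: for $y\in A$ one applies the filling-in of the previous paragraph along $\partial L(x_0,y)$ to show that $\Omega$ is ``thick'' near $\partial L(x_0,y)$, and since replacing $y$ by a nearby $y'\in\Omega$ perturbs $L(x_0,y)$ only within a small neighbourhood of $\partial L(x_0,y)$, and $L(x_0,y)$ is itself $r$-convex (being an intersection of radius-$r$ balls), one deduces $L(x_0,y')\subseteq\Omega$, i.e.\ $y'\in A$. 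Connectedness then gives $A=\Omega$ for every $x_0$, which is exactly the $r$-convexity of $\Omega$.

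The principal obstacle is geometric. In the filling-in step one must keep careful track of how the radius-$r$ caps bounding successive collars nest, with special care near the rim where the two caps of $\partial L(x,y)$ meet; and, for the openness of $A$, one must extract from \ref{eq:sequence:arcprop} enough information to control $\Omega$ on \emph{both} sides of $\partial L(x_0,y)$, not merely inside the lens, so that the perturbed lens $L(x_0,y')$ is genuinely absorbed.
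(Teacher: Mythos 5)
Your ``only if'' direction and the reduction to small lenses are fine, but the ``if'' direction has two genuine gaps, both of which you partly acknowledge. First, in the filling-in step the assertion that ``the inner rim of this collar is again a union of radius-$r$ caps'' is false: if $p,q$ lie on an arc of radius $r$ centred at $c$ with $\|p-q\|=\eps_j$, the inner boundary of the union of the lenses $L(p,q)$ is (away from the tips) an arc of radius $r-2\delta_j$ about $c$, where $\delta_j=r-\sqrt{r^2-\eps_j^2/4}$, not an arc of radius $r$. The iteration can still be set up, but the nesting of the successive rims, and above all the thin tips of $L(x,y)$ near $x$ and $y$ where pairs at distance $\eps_j$ on the remaining rim cease to exist, require a real argument. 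The paper does exactly this work by sweeping families of short arcs over a one-parameter family of chords $\ell(t)$ and running an open--closed argument in the parameter $t$, with a relative-interior claim to handle the endpoints; your sketch stops where that work begins.

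Second, and more seriously, the openness of $A=\{y\in\Omega\;:\;L(x_0,y)\subseteq\Omega\}$ is not established, and the obstruction you name at the end --- that \ref{eq:sequence:arcprop} gives no control of $\Omega$ \emph{outside} $\partial L(x_0,y)$ --- is precisely why this route does not close. Indeed, for $p,q\in L(x_0,y)$ one has $L(p,q)\subseteq L(x_0,y)$ (every radius-$r$ ball containing $x_0$ and $y$ contains $p$ and $q$), so the collars produced by your procedure lie entirely inside $L(x_0,y)$ and say nothing about where a perturbed lens $L(x_0,y')$ protrudes. The paper circumvents this by first proving that $\Omega$ is \emph{convex} (via Proposition \ref{GuntherLemma}; the paper explicitly remarks that convexity is not obvious even if whole lenses, rather than their boundaries, are assumed to lie in $\Omega$), and then, for a putative bad pair $(x,y)$, moving the points towards each other along the segment $\conv\{x,y\}\subseteq\Omega$: the set $\mathcal{A}$ of parameters $\alpha$ for which $(x_\alpha,y_\alpha)$ has the arc property is closed with infimum $\beta>0$, the lens of $(x_\beta,y_\beta)$ is shown to lie in $\op{int}\Omega$ by marching along its boundary arcs in steps of length $\eps_1$, and compactness of that lens then allows $\alpha$ to be decreased below $\beta$, a contradiction. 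You would need either to prove convexity first and adopt some such one-dimensional minimality argument, or to supply a genuinely new mechanism for the openness of $A$; as written, the proof is incomplete.
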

We note that assuming property \ref{eq:sequence:arcprop}, it is not obvious that the set 
$\Omega$ is convex. Even if one strengthens \ref{eq:sequence:arcprop} by requiring the whole lens 
to be contained in $\Omega$, instead of merely its boundary, convexity of $\Omega$ is not evident. Proposition \ref{sequencearcprop} will turn out to be essential for the proofs of Theorems \ref{existlim} and \ref{localmainthm}. 

Proposition \ref{sequencearcprop} remains true in the limit $r\to \infty$ by which we mean the following result.
\begin{Prop}
Let $\Omega \subseteq X$ be closed and connected. Then $\Omega$ is convex if and only if the following holds.
\begin{asparaenum}[(A)]
\setcounter{enumi}{2}
\item
\label{eq:sequence:conv} 
 {
There exists a sequence $(\varepsilon_i)_{i \in \N}$ of positive real numbers converging to $0$ such that $\conv\{x,y\}\subseteq \Omega$ whenever $x,y \in \Omega$ and $\|x-y\|=\eps_i$ for some $i \in \N$. 
}
\end{asparaenum}
\label{GuntherLemma}
\end{Prop}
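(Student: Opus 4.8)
The forward implication is trivial: if $\Omega$ is convex, then the sequence $\eps_i := 1/i$ witnesses \ref{eq:sequence:conv}, since $\conv\{x,y\} \subseteq \Omega$ for every $x,y \in \Omega$ irrespective of their distance. For the converse I would argue directly; this is, in effect, the limiting case $r \to \infty$ of the reasoning behind Proposition \ref{sequencearcprop}, where the set on the left-hand side of \ref{linse} shrinks to the segment $\conv\{x,y\}$. So assume \ref{eq:sequence:conv} holds, and let $a,b \in \Omega$ with $a \neq b$; the goal is to show $\conv\{a,b\} \subseteq \Omega$.

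The plan is a maximality argument. Put $u := (b-a)/\|b-a\|$ and $\tau := \sup\{\, s \in [0,\|b-a\|] \mid \conv\{a, a+su\} \subseteq \Omega \,\}$. Since $\Omega$ is closed, this supremum is attained, so $p := a + \tau u \in \Omega$ and $\conv\{a,p\} \subseteq \Omega$. Suppose, for contradiction, that $\tau < \|b-a\|$; then $b = p + (\|b-a\| - \tau)u$, so $\Omega$ contains a point strictly beyond $p$ on the ray issuing from $p$ in direction $u$. If one can exhibit an index $j$ with $\eps_j < \|b-a\| - \tau$ and $q := p + \eps_j u \in \Omega$, then \ref{eq:sequence:conv}, applied to $p$ and $q$ (which satisfy $\|p - q\| = \eps_j$), gives $\conv\{p,q\} \subseteq \Omega$; since $a$, $p$, $q$ are collinear with $p$ strictly between the others, $\conv\{a,q\} = \conv\{a,p\} \cup \conv\{p,q\} \subseteq \Omega$, and $q$ lies farther from $a$ along $u$ than $p$ does --- contradicting the maximality of $\tau$. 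Hence $\tau = \|b-a\|$ and $\conv\{a,b\} \subseteq \Omega$.

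The only missing step --- and the one I expect to be the main obstacle --- is the production of the point $q$: equivalently, the assertion that \ref{eq:sequence:conv} prevents $\Omega$ from ``terminating'' at $p$ in the direction $u$ while $\Omega$ still meets the ray from $p$ through $u$ further along. Without hypothesis \ref{eq:sequence:conv} this fails --- consider a union of two segments sharing an endpoint --- so connectedness and closedness of $\Omega$ have to be used essentially at this point: if $p + \eps_j u \notin \Omega$ for all sufficiently large $j$, then, since $\Omega$ is connected and contains both $p$ and the point $b$ lying beyond it, $\Omega$ must, near $p$, ``bend away'' from that ray; closedness then provides two points of $\Omega$ arbitrarily close to $p$ --- one on the incoming segment $\conv\{a,p\}$, one on the portion of $\Omega$ that bends off --- and the segment joining them, after being rescaled to one of the admissible lengths $\eps_i$, escapes $\Omega$, contradicting \ref{eq:sequence:conv}. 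Converting this picture into a rigorous argument --- choosing the two witness points, arranging their distance to be exactly some $\eps_i$, and verifying that the connecting segment really leaves $\Omega$ --- is the technical heart of the proof, and would be handled by the same geometric reasoning that underlies Proposition \ref{sequencearcprop}, now with the ``lens'' degenerated to $\conv\{x,y\}$.
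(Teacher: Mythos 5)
There is a genuine gap, and it sits exactly where you place it --- but it is not merely ``technical''; the picture you sketch to fill it is not correct. Your maximality argument needs, at the maximal point $p$, either a point $q=p+\eps_j u\in\Omega$ further along the ray, or else ``two witness points of $\Omega$ arbitrarily close to $p$, one on $\conv\{a,p\}$ and one on a portion of $\Omega$ that bends off.'' Connectedness does not supply either. A closed connected set need not be path-connected, and even when it is, the connection from $p$ to $b$ need not pass anywhere near $p$ except through the incoming segment itself: locally at $p$, $\Omega$ may look exactly like a segment terminating at $p$, with the global connection to $b$ routed through $a$ or through points far away. So no contradiction can be extracted by examining $\Omega$ only in a neighborhood of $p$; the obstruction to your argument is global, not local. (Your second fallback --- ``rescaling'' a segment between two nearby points of $\Omega$ to one of the admissible lengths $\eps_i$ --- also does not produce a pair of points \emph{of $\Omega$} at distance exactly $\eps_i$, which is what an application of \ref{eq:sequence:conv} for a contradiction would require.)

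The paper's proof is structured precisely to avoid this trap, and the structure is essential. First it proves $\conv\{x,y\}\subseteq\Omega$ only for \emph{polygonally connected} pairs: the existing polygon through a third point $z$ provides, via an intermediate-value argument on the lengths of a continuous family of cross-segments of the triangle $\conv\{z,x,y\}$, the auxiliary pairs at distance exactly $\eps_i$ that your argument cannot manufacture, and an induction then propagates interiority along the segment so that an open--closed argument closes the gap between $z$ and $\conv\{x,y\}$. Second, it shows that \ref{eq:sequence:conv} forces any two points of $\Omega$ inside a small ball to be polygonally connected (again by an intermediate-value argument producing a point of $\conv\{p,q\}$ at distance exactly $\eps_2$ from a given nearby point), whence $\Omega$ is \emph{locally} convex. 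Third --- and this is the step your proposal has no substitute for --- it invokes the Tietze--Nakajima theorem (closed $+$ connected $+$ locally convex $\Rightarrow$ convex) to pass from local to global. If you want to salvage your approach, you would in effect have to reprove that globalization theorem along the single segment $\conv\{a,b\}$, which is exactly what fails when $\Omega$ is connected but not locally convex along that segment a priori.
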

\section{Preliminaries}
\label{s:prelims}
Throughout the paper, $\R$ denotes the field of real numbers, and
$\N$, the set of natural numbers, $\N=\{1,2,\ldots \}$. As
usual, $[\sigma,\tau]$, $(\sigma,\tau)$, $(\sigma,\tau]$ and
$[\sigma,\tau)$ denote the closed, open and half-open, respectively,
intervals in $\R$ with end points $\sigma$ and $\tau$. The open ball of
radius $r >0$ and center $c \in X$ is denoted as $B(c,r)$. The closure,
the interior, the boundary and the convex hull of a set $\Omega \subseteq X$ are
denoted by $\op{cl}\Omega$, $\op{int}\Omega$, $\boundary{\Omega}$, %
and $\op{conv} \Omega$,
respectively. In particular, $\op{conv}\{x,y\}$ is the line segment
$\{tx+(1-t)y \, | \, t \in [0,1]\}$.

We will frequently use that $\delta_{\overline B(0,r)}(\eps) = r- \sqrt{r^2- \eps^2/4}$ for any $\eps \in [0,2r]$ and $r>0$ \cite[Chapter 3, Section 3]{Diestel75}. From this we also conclude that $\lim_{\eps \to 0, \eps>0} \Quott{\ball(0,r)}{\eps}= 1/(8r)$ and $\Quott{\ball(0,r)}{\eps}> 1/(8r)$ for all $r>0$ and $\eps \in (0,2r]$.

The definition of strong convexity which we have
  adopted in Section \ref{s:intro} has the appeal of being concise and
  completely analogous to the case for ordinary convexity. However, it
  is often more convenient to work with \begriff{short arcs}
  \cite{Mayer35} and related concepts, which are introduced below.
\begin{Definition}
Let $S$ be the intersection of a sphere $\partial B(c,r)$ of radius $r>0$ and a two-dimensional affine subspace of $X$ containing $c$. If $x,y\in S$ and $x \neq y$, then $S \setminus \{x,y\}$ consists of two connected components $A'$ and $A''$. If one component is not longer than the other, say $A'$ is not longer than $A''$, then $\closure A'$ is called a \textit{short arc of radius $r$} joining $x$ and $y$. The set $\{x\}$ for $x \in X$ is also called a \textit{short arc of radius $r$}. For $x,y \in \hil$ and $r>0$ we define the \textit{midpoint} $z$ of a short arc  $A$ of radius $r$ joining $x$ and $y$ as follows: $z \in A$ is the unique element given by $\innerProd{x-y}{z-(x+y)/2}=0$.
\end{Definition}
\begin{Definition}
Let $\Omega,\Omega' \subseteq X$ and $r>0$. A pair $(x,y) \in \Omega \times \Omega$ possesses the \textit{arc property for $\Omega'$ and (radius) $r$} if for every short arc $A$ of radius $r$ joining $x$ and $y$, it holds that $A \subseteq \Omega'$. The set $\Omega$ possesses the \textit{arc property for (radius) $r$} if every pair $(x,y) \in \Omega \times \Omega$ possesses the arc property for $\Omega$ and $r$.
\end{Definition}
We remark that the arc property for radius $r$ does not impose %
any restriction on pairs of points $x,y \in \Omega$ whose distance exceeds $2r$. %
Interestingly, if such points exist, then necessarily $\Omega = \hil$: 
\begin{Prop}
\label{Reissigprop}
Let $r>0$ and $\Omega \subseteq \hil$. Then $\Omega$ is $r$-convex if and only if it is connected and possesses the arc property for radius $r$.
\end{Prop}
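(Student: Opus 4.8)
The plan is to prove the two implications separately, reducing both to a comparison between the definition of $r$-convexity via the intersection of balls in \ref{linse} and the more hands-on notion of the arc property. The crucial geometric observation is that for $x,y\in X$ with $0<\|x-y\|\le 2r$, the set $\bigcap \ball(c,r)$ on the left-hand side of \ref{linse}, where the intersection runs over all balls of radius $r$ containing $x$ and $y$, is precisely the \emph{lens} bounded by the two short arcs of radius $r$ joining $x$ and $y$ that lie in the two-dimensional affine plane through $x$, $y$ and the midpoint of the segment; in particular its boundary is the union of two short arcs of radius $r$ joining $x$ and $y$, and every short arc of radius $r$ joining $x$ and $y$ is contained in this lens. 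When $\|x-y\|>2r$ no ball of radius $r$ contains both points, so the intersection equals $X$, and correspondingly there is no short arc of radius $r$ joining $x$ and $y$ (the definition only produces one when $x,y$ lie on a common sphere of radius $r$). I would isolate this as a short preliminary computation in the two-dimensional plane spanned by $x-y$ and an orthogonal direction, using the elementary fact $\delta_{\ball(0,r)}(\eps)=r-\sqrt{r^2-\eps^2/4}$ already recorded in Section \ref{s:prelims}.

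Granting that, the forward implication is almost immediate: if $\Omega$ is $r$-convex, then by definition $\Omega\ne\emptyset$ forces $\Omega=X$ or, more usefully, for every pair $x,y\in\Omega$ the whole lens — hence in particular every short arc of radius $r$ joining them — lies in $\Omega$, which is exactly the arc property for radius $r$. Connectedness requires a separate argument: I would show that an $r$-convex set is path-connected by joining any two of its points through a short arc when their distance is at most $2r$, and, when the distance exceeds $2r$, by first noting that in that case $\Omega=X$ (since the intersection in \ref{linse} is then all of $X$), so connectedness is trivial. Thus $r$-convexity implies both connectedness and the arc property.

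For the converse, assume $\Omega$ is connected and has the arc property for radius $r$; I must show \ref{linse} holds for all $x,y\in\Omega$, i.e. the full closed lens is contained in $\Omega$ whenever $\|x-y\|\le 2r$, and that $\Omega=X$ as soon as two points of $\Omega$ lie at distance $>2r$. The arc property directly gives that the \emph{boundary} of the lens (the two short arcs) lies in $\Omega$; the work is to fill in the interior. Here I would argue by a connectedness/openness-in-the-lens argument reminiscent of the proof of Proposition \ref{GuntherLemma}: fix $x,y\in\Omega$ with $\|x-y\|\le 2r$, work inside the two-dimensional plane $P$ containing the lens, and show the set of points of the lens that belong to $\Omega$ is relatively closed (clear, since $\Omega$ need not be closed here — so instead I would use that any interior point of the lens lies on a short arc of radius $r$ joining two other points of $\partial$(lens)$\subseteq\Omega$, sweeping the arcs across the lens) and thereby exhaust the interior. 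The genuinely delicate point, and the one I expect to be the main obstacle, is the statement that the existence of points of $\Omega$ at distance $>2r$ forces $\Omega=X$: one must propagate the arc property along a connecting path, using that at scale $\le 2r$ it pins down lenses, and then leverage that two far-apart points cannot coexist with local lens-fullness unless the constraint degenerates — this is where connectedness is used essentially and where a careful continuity argument (parametrising a path from $x$ to $y$ and tracking the pairwise distances, invoking the arc property on consecutive nearby points and passing to the limit) is needed. Everything else is routine two-dimensional Euclidean geometry.
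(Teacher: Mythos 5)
Your forward implication and your treatment of the case $\|x-y\|\le 2r$ in the converse are sound and are pitched at the same level of detail as the paper, which likewise dismisses the lens-filling step for nearby points as straightforward (your observation that every interior point of the planar lens lies on a short arc of radius $r$ whose endpoints are on the lens boundary, itself covered by the arc property, can indeed be made rigorous). The genuine gap is exactly where you predict it: the claim that two points of $\Omega$ at distance greater than $2r$ force $\Omega=\hil$. Your sketch for this step does not contain a workable idea, for two reasons. First, you propose to ``parametrise a path from $x$ to $y$,'' but $\Omega$ is only assumed connected, not path-connected, and it is neither open nor closed here; the paper bridges this by observing that the arc property makes $\Omega$ locally convex, whence a connected locally convex set is polygonally connected (a Tietze--Nakajima-type theorem), so that one may assume $\op{conv}\{x,z\}\cup\op{conv}\{z,y\}\subseteq\Omega$ for some $z$.

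Second, and more importantly, ``tracking pairwise distances and passing to the limit'' along such a path does not by itself yield a contradiction: the arc property is simply silent about pairs at distance $>2r$, so nothing degenerates as the distance crosses $2r$ --- at distance exactly $2r$ the intersection in \ref{linse} is the single full ball of radius $r$, which is perfectly consistent with $\Omega$ being large but bounded unless one proves otherwise. The paper's actual mechanism is a quantitative bootstrapping construction: from a pair $a,c$ at distance exactly $2r$ one gets $\ball((a+c)/2,r)\subseteq\Omega$ and hence a segment in $\Omega$ of length slightly exceeding $2r$; normalising $r=1$ and placing its endpoints at $(\pm(1+\gamma),0)$, one sets $p_{\pm n}=\bigl(\pm(1+\gamma n^{-1/2}),\,\gamma^2\sum_{k=2}^{n}k^{-1}\bigr)$ and verifies inductively that $\op{conv}\{p_n,p_{-n},p_{n+1},p_{-(n+1)}\}\subseteq\Omega$, each step using only lenses of pairs at distance at most $2$. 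Because the harmonic series diverges while the horizontal excess $\gamma n^{-1/2}$ stays positive, this forces an entire line, and then the whole plane, into $\Omega$. Some such explicit unbounded-propagation argument is indispensable; without one, your proof of the $>2r$ case does not close.
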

Under the additional assumption that $\Omega$ be closed, this result could be obtained from Theorem \ref{rconvexweaklyrconvex} of Mayer, or alternatively, from a combination of the theorem of 
Tietze and Nakajima \cite[Theorem 4.4]{Valentine64} and two results of Balashov and Repov{\v{s}} \cite[Theorem 2.1]{BalashovRepovs09}, \cite[Lemma 3.2]{BalashovRepovs11b}. We will give a direct and elementary proof below which does not rely on the closedness of $\Omega$. Proposition \ref{Reissigprop} will then be used in 
Section \ref{s:MayersTheorems} to establish Mayer's theorems in Hilbert space.

The following term will be used below: Two points $x$ and $y$ of a subset $\Omega \subseteq X$ are
\textit{polygonally connected} if there exist $p_1, \ldots, p_{n-1} \in \Omega$, $n \in \N$, such that
$\bigcup_{i=0}^{n-1} \conv\{p_i,p_{i+1}\} \subseteq \Omega$, where
$p_0 = x$ and $p_n = y$.
\begin{proof}[Proof of Proposition \ref{Reissigprop}]
Without loss of generality, we assume $r = 1$. The necessity of the condition is
easily established, so we prove only its sufficiency. Furthermore, if
$x,y \in \Omega$ such that $\|x-y\| \leq 2$ then it is straightforward
to show that \ref{linse} holds. In the remainder of the proof
we will make frequent use of the latter without mentioning it
explicitly. Suppose now that there exist $x,y \in
\Omega$ such that $\|x-y\| > 2$. We claim that in this case $\Omega =
\hil$. Indeed, as $\Omega$ is locally convex \cite[Theorem H 1]{Mayer35}, it
is polygonally connected \cite[Theorem 4.3]{Valentine64}. Hence, without
loss of generality, there exists $z \in \Omega$ such that $\Delta \subseteq \Omega$
where $\Delta = \conv\{x,z\} \cup \conv\{y,z\}$. Consequently, there exist $a$ and $c$ in the relative interior of the line segment $\conv\{x,z\}$ and $\conv\{y,z\}$, respectively, such that $\|a-c\|=2$. By the arc property, there exists $a' \in \Omega$ such that $a \in \conv\{a',c\} \subseteq \Omega$ and $\|a'-c\|>2$. Without loss of generality, we assume that $\hil$ is the Euclidean plane
 $\mathbb{R}^2$ and  $p_{+1} := a' = (1+\gamma,0)$,
  $p_{-1} := c = ( -(1+ \gamma),0)$
for some $\gamma \in (0,1/2)$. We consider the points
\begin{equation}
\label{p:point:coord}
p_{\pm n} =\Big  (\pm (1+ \gamma \cdot n^{-{1}/{2}}), \gamma^2 \textstyle \sum_{k=2}^n k^{-1} \Big ), \ n \in \N,
\end{equation}
and claim that $\conv\{p_n,p_{-n},p_{n+1},p_{-(n+1)}\} \subseteq \Omega$ for all $n \in \N$. Indeed, if $\conv\{p_n,p_{-n}\} \subseteq \Omega$ for some $n \in \N$, then $\ball(p_{\pm n} \mp (1,0),1) \subseteq \Omega$ since $\Omega$ is $1$-convex. This implies $p_{n+1} \in \Omega$ since $$\|p_{n+1} - (p_n-(1,0))\|^2 = (1+(n+1)^{-1/2}\gamma - n^{-1/2}\gamma)^2+(n+1)^{-2}\gamma^4<1$$ using \ref{p:point:coord}, and analogously for $p_{-(n+1)}$. Moreover, if $q_n = (p_n+p_{-n})/2$, then $q_n,q_{n+1} \in \ball(p_{\pm n} \mp (1,0),1)$ as $$1 > \gamma^2/n + \gamma^4/(n+1)^2=\|q_{n+1}-(p_{\pm n} \mp (1,0))\|^2>\|q_n - (p_{\pm n}\mp (1,0))\|^2,$$ which proves our claim. It follows that $\{0\} \times \R \subseteq \Omega$ as the series diverges and the previous arguments also apply with $-p_{\pm n}$ in place of $p_{\pm n}$, and suitable modifications of our choice of $p_{\pm 1}$ yield $\Omega = \R^2$.
\end{proof}
\begin{Definition}
Let $\Omega \subseteq \hil$ and $x \in \boundary \Omega$. A vector $v \in \hil$ is \textit{normal to $\Omega$ at $x$} if $\innerProd{v}{y-x} \leq 0$ for all $y \in \Omega$. If, additionally, $\|v\|= 1$ then $v$ is a \textit{unit normal to $\Omega$ at $x$}.
\end{Definition}
So every unit normal defines both a supporting half-space and a supporting hyperplane of $\Omega$ at $x$ and vice versa. For these well-known concepts see e.g. \cite{Valentine64}. The proof of the following result from \cite{Mayer35} carries over to the Hilbert space setting.
\begin{Lemma}
Let $r>0$ and let $\Omega \subseteq \hil$ be closed and convex. Let $x \in \partial \Omega$ and 
$y \in \Omega$ be such that $\|x-y\|\leq 2r$. If the pair $(x,y)$ possesses the arc property for $\Omega$ and $r$, %
then $y \in \ball(x-rv_x,r)$ for all unit normals $v_x$ to $\Omega$ at $x$. 
\label{LemmaMayer}
\end{Lemma}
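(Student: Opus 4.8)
The plan is to reduce the statement to a planar problem and then exploit the arc property directly. First I would fix a unit normal $v_x$ to $\Omega$ at $x$ and suppose, for contradiction, that $y \notin \ball(x - r v_x, r)$. Since $v_x$ is a unit normal, $\innerProd{v_x}{y - x} \leq 0$, so $y$ lies in the supporting half-space bounded by the hyperplane through $x$ orthogonal to $v_x$. Everything relevant—$x$, $y$, and the center $x - r v_x$—lies in a fixed two-dimensional affine subspace, so I would pass to the Euclidean plane, placing $x$ at the origin, $v_x = (0,1)$, and $y$ in the closed lower half-plane with $\|y\| = \eps \leq 2r$. The assumption $y \notin \ball(x - r v_x, r)$ then says $\|y - (0,-r)\| > r$, i.e. $y$ lies strictly outside the circle of radius $r$ centered at $(0,-r)$, which is the circle through the origin tangent to the supporting line.

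The key geometric observation is that under these conditions one of the two short arcs of radius $r$ joining $x$ and $y$ must cross into the open half-space $\{\, z \mid \innerProd{v_x}{z - x} > 0 \,\}$, contradicting the fact that this half-space misses $\Omega$ (every point of $\Omega$ satisfies $\innerProd{v_x}{z - x} \leq 0$, so $\Omega$, and in particular any short arc contained in $\Omega$, lies in the closed lower half-space). Concretely, a short arc of radius $r$ joining $x$ and $y$ is an arc of a circle of radius $r$ through both points subtending an angle at most $\pi$ at its center. I would argue that because $y$ is below the tangent line and strictly outside the tangent circle centered at $(0,-r)$, any radius-$r$ circular arc from $x$ to $y$ that stays in the closed lower half-plane near $x$ is forced, by the time it reaches $y$, to have already risen above the tangent line on the other side of $x$; equivalently, one computes that the center of such a circle lies on the wrong side, making part of the short arc poke above $y=0$. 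This is the step I expect to be the main obstacle: one has to be careful that it is genuinely the \emph{short} arc (the one of length at most $\pi r$), not the complementary long arc, that violates the half-space condition, and to handle the degenerate cases $\eps = 2r$ and $y$ on the boundary circle cleanly.

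For the bookkeeping I would use the characterization of short arcs together with Lemma~\ref{LemmaMayer}'s hypothesis that $(x,y)$ has the arc property for $\Omega$ and $r$, which guarantees \emph{every} short arc joining $x$ and $y$ lies in $\Omega$; combined with $\Omega$ lying in the closed supporting half-space, this gives the contradiction as soon as one exhibits a single short arc with a point strictly above the supporting hyperplane. A convenient way to organize the planar computation: parametrize the family of circles of radius $r$ through $x=0$ by the angle their center makes, observe that the two circles of radius $r$ through both $0$ and $y$ have centers symmetric about the perpendicular bisector of $\op{conv}\{0,y\}$, and check that at least one of the two corresponding short arcs contains a point with positive second coordinate precisely when $y$ fails to lie in $\ball((0,-r),r)$; when $y$ does lie in that ball both short arcs stay in the closed lower half-plane, which is the expected boundary behavior. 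Finally, since $v_x$ was an arbitrary unit normal to $\Omega$ at $x$, the conclusion holds for all of them, completing the proof.
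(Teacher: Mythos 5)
Your strategy is sound, and it is worth noting that the paper itself gives no proof of this lemma: it only remarks that Mayer's original argument carries over to the Hilbert space setting. So your proposal is not diverging from a proof in the text but supplying the missing one, and it is essentially the classical argument: pass to the supporting half-space at $x$, reduce to the plane spanned by $y-x$ and $v_x$, and show that if $y$ escapes the tangent ball then a short arc escapes the half-space. The reduction to that particular plane is the right move (the arc property covers all short arcs, hence in particular the two lying in this plane), and the degenerate case $y-x\parallel v_x$, where no such plane exists, is harmless since then $y=x-\eps v_x\in\ball(x-rv_x,r)$ automatically for $\eps\le 2r$.

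The step you flag as the main obstacle does close, and in fact only one of the two circles ever matters. Put $x=0$, $v_x=(0,1)$, $y=\eps(\cos\phi,-\sin\phi)$ with $0<\eps\le 2r$ and $\phi\in[0,\pi/2)$; let $u=y/\eps$, let $n=(\sin\phi,\cos\phi)$ be the unit normal to the chord with $\innerProd{n}{v_x}>0$, and define $\sigma^*\in(0,\pi/2]$ by $\sin\sigma^*=\eps/(2r)$, $\cos\sigma^*=\sqrt{r^2-\eps^2/4}/r$. The circle of radius $r$ through $0$ and $y$ with center $c=y/2-r\cos\sigma^*\cdot n$ lies on the $-n$ side of the chord, so its \emph{short} arc is exactly $\{\,c+r(\cos\sigma\,n+\sin\sigma\,u)\;:\;|\sigma|\le\sigma^*\,\}$, whose second coordinate is $r[\cos(\sigma+\phi)-\cos(\sigma^*-\phi)]$. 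The hypothesis $y\notin\ball((0,-r),r)$ reads $\eps^2+2ry_2>0$, i.e. $\sin\phi<\sin\sigma^*$, i.e. $\phi<\sigma^*$; then $\sigma=-\phi$ is admissible and gives the value $r[1-\cos(\sigma^*-\phi)]>0$, so this short arc leaves the half-space $\{z:\innerProd{v_x}{z}\le 0\}\supseteq\Omega$, contradicting the arc property. If instead $\phi\ge\sigma^*$, the maximum is $0$ and is attained only at $x$, while the other circle's short arc has second coordinate $r[\cos(\sigma^*+\phi)-\cos(\sigma-\phi)]\le 0$ for \emph{all} admissible $\sigma$ and $\phi$; this confirms the boundary behaviour you predicted and shows that the phrase ``one of the two short arcs'' can be sharpened to ``the short arc bulging toward $v_x$.'' With that computation written out, your proof is complete.
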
 
Most of the following characterizations of strong convexity have been already
observed in \cite[Proposition 3.1]{FrankowskaOlech81} for the finite-dimensional
case $X=\R^n$.
\begin{Prop}
\label{p:globalchar}
Let $r > 0$ and $\Omega \subseteq \hil$ be closed. Then the following conditions are equivalent: 
\begin{asparaenum}[(i)]
\item
\label{p:globalchar:i}
{$\Omega$ is $r$-convex.}
\item
\label{p:globalchar:ii}
{$\Omega$ is convex and $\Omega \subseteq \overline{B}(x-rv_x,r)$ for all $x \in \partial \Omega$ and all unit normals $v_x$ to $\Omega$ at $x$.}
\item
\label{p:globalchar:iv}
{$\Omega= \bigcap_{c \in M} \overline B(c,r)$ for some $M \subseteq \hil$.}
\item
\label{p:globalchar:v}
{The inequality
\gll
\delta_{\Omega}(\varepsilon) \geq r - \sqrt{r^2-{\varepsilon^2}/{4}}
\gle
is fulfilled for every $\varepsilon$.
}
\end{asparaenum}
\end{Prop}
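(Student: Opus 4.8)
I would prove the chain of implications $(\ref{p:globalchar:i}) \Rightarrow (\ref{p:globalchar:ii}) \Rightarrow (\ref{p:globalchar:iv}) \Rightarrow (\ref{p:globalchar:i})$, and then handle $(\ref{p:globalchar:v})$ separately by showing $(\ref{p:globalchar:i}) \Leftrightarrow (\ref{p:globalchar:v})$. For $(\ref{p:globalchar:i}) \Rightarrow (\ref{p:globalchar:ii})$: a closed $r$-convex set is in particular $\infty$-convex in the limiting sense, hence convex (or one invokes that the lens condition with $x,y$ close forces local convexity and then Theorem \ref{rconvexweaklyrconvex}); moreover, by Proposition \ref{Reissigprop} the set $\Omega$ possesses the arc property for radius $r$, so for any $x \in \partial\Omega$, any $y \in \Omega$ with $\|x-y\| \le 2r$, and any unit normal $v_x$, Lemma \ref{LemmaMayer} gives $y \in \ball(x - r v_x, r)$. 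For $y$ with $\|x-y\| > 2r$, Proposition \ref{Reissigprop} forces $\Omega = X$, a case one dismisses at the outset (if $\Omega = X$ there are no boundary points and all conditions hold trivially); so in fact $\|x - y\| \le 2r$ always and $\Omega \subseteq \ball(x - r v_x, r)$.

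For $(\ref{p:globalchar:ii}) \Rightarrow (\ref{p:globalchar:iv})$: take $M = \{x - r v_x \mid x \in \partial\Omega, \ v_x \text{ a unit normal to } \Omega \text{ at } x\}$. Then $(\ref{p:globalchar:ii})$ says $\Omega \subseteq \bigcap_{c \in M} \ball(c,r)$. For the reverse inclusion, suppose $z \notin \Omega$; since $\Omega$ is closed and convex and $X$ is a Hilbert space, there is a nearest point $x \in \Omega$ to $z$, and $v := (z-x)/\|z-x\|$ is a unit normal to $\Omega$ at $x$. The center $c = x - r v \in M$. One checks by a direct computation — writing $z = x + t v$ with $t = \|z - x\| > 0$ — that $\|z - c\|^2 = (r + t)^2 > r^2$, so $z \notin \ball(c,r)$, establishing $\bigcap_{c \in M}\ball(c,r) \subseteq \Omega$. (A minor point: if $\partial\Omega$ is empty then $\Omega = X$ or $\Omega = \emptyset$; the empty set equals $\bigcap_{c \in \emptyset}\ball(c,r)$ by the stated convention, and $X$ is excluded or handled trivially.)

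For $(\ref{p:globalchar:iv}) \Rightarrow (\ref{p:globalchar:i})$: an intersection of closed $r$-balls is $r$-convex, because $r$-convexity is preserved under arbitrary intersections (if $x, y$ lie in every $\ball(c,r)$, $c \in M$, then the intersection of \emph{all} balls containing $x$ and $y$ is contained in each $\ball(c,r)$, hence in $\bigcap_{c\in M}\ball(c,r) = \Omega$), and each single ball $\ball(c,r)$ is trivially $r$-convex. Finally, for $(\ref{p:globalchar:i}) \Leftrightarrow (\ref{p:globalchar:v})$: using the known value $\delta_{\ball(0,r)}(\eps) = r - \sqrt{r^2 - \eps^2/4}$ recalled in Section \ref{s:prelims}, condition $(\ref{p:globalchar:v})$ says exactly that $\Omega$ is ``at least as convex'' as a ball of radius $r$. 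The implication $(\ref{p:globalchar:iv}) \Rightarrow (\ref{p:globalchar:v})$ follows since the modulus of an intersection dominates the infimum of the moduli, and each $\ball(c,r)$ achieves the stated bound; conversely, $(\ref{p:globalchar:v})$ together with the arc-property characterization (Proposition \ref{Reissigprop}) and Lemma \ref{LemmaMayer} recovers the spherical support condition $(\ref{p:globalchar:ii})$, after first noting $\delta_\Omega(\eps) \ge 0 > -\infty$ forces $\Omega$ convex.

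\textbf{Main obstacle.} The delicate point is not any single computation but the bookkeeping around the degenerate case $\Omega = X$ (and $\Omega = \emptyset$) and around boundary points: one must make sure that ``for all $x \in \partial\Omega$ and all unit normals'' is vacuously fine when there are no such pairs, and that the convention $\sup\emptyset = -\infty$ in the modulus interacts correctly with $(\ref{p:globalchar:v})$. The other subtlety is the passage in $(\ref{p:globalchar:i}) \Rightarrow (\ref{p:globalchar:ii})$ from ``$r$-convex'' to ``convex'': this is where Proposition \ref{Reissigprop} (giving the arc property, and ruling out pairs at distance $> 2r$ unless $\Omega = X$) does the real work, so that Lemma \ref{LemmaMayer} applies uniformly to every $y \in \Omega$.
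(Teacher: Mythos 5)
Your chain \ref{p:globalchar:i} $\fol$ \ref{p:globalchar:ii} $\fol$ \ref{p:globalchar:iv} $\fol$ \ref{p:globalchar:i}, together with \ref{p:globalchar:iv} $\fol$ \ref{p:globalchar:v}, is sound, and in two places you take a genuinely different and arguably more elementary route than the paper: for \ref{p:globalchar:ii} $\fol$ \ref{p:globalchar:iv} you use the metric projection onto the closed convex set $\Omega$ to exhibit a ball of $M$ missing a given $z\notin\Omega$ (the paper instead invokes a supporting half-space theorem of Ioffe), and for \ref{p:globalchar:iv} $\fol$ \ref{p:globalchar:i} you observe that $r$-convexity passes to arbitrary intersections and that a single ball $\ball(c,r)$ is $r$-convex, whereas the paper closes its loop through the modulus inequality.

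The genuine gap is in your treatment of \ref{p:globalchar:v} $\fol$ \ref{p:globalchar:ii}. You propose to recover the spherical support condition from the modulus inequality ``together with Proposition \ref{Reissigprop} and Lemma \ref{LemmaMayer}'', but Lemma \ref{LemmaMayer} points the wrong way for this: its \emph{hypothesis} is that the pair $(x,y)$ possesses the arc property, which is precisely what has to be extracted from the modulus inequality. Nor does a one-step estimate suffice: if $v_x$ is a unit normal at $x$ and $y\in\Omega$ with $\|x-y\|=\eps$, the modulus inequality only yields $\innerProd{v_x}{y-x}\le -2\bigl(r-\sqrt{r^2-\eps^2/4}\bigr)$, which is asymptotically $-\eps^2/(4r)$, while $y\in\ball(x-rv_x,r)$ requires $\innerProd{v_x}{y-x}\le-\eps^2/(2r)$ --- off by a factor of $2$. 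The missing idea is the bisection argument the paper uses: condition \ref{p:globalchar:v} places the \emph{midpoint} of every short arc of radius $r$ joining $x,y\in\Omega$ into $\Omega$, since that midpoint lies at distance exactly $r-\sqrt{r^2-\eps^2/4}$ from $(x+y)/2$ in a direction orthogonal to $x-y$; iterating on the resulting sub-arcs produces, for each point $a$ of the arc, a sequence in $\Omega$ converging to $a$, and closedness of $\Omega$ then gives the whole arc, hence the arc property, hence $r$-convexity via Proposition \ref{Reissigprop}. With that insertion (and your correct observation that $\delta_\Omega\ge 0$ plus closedness gives convexity) your proof is complete.
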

\begin{proof}
Without loss of generality, let $\Omega$ be non-empty and $\Omega \neq \hil$. Implication \ref{p:globalchar:i} $\fol$ \ref{p:globalchar:ii} follows easily from Proposition \ref{Reissigprop} and Lemma \ref{LemmaMayer}. For the proof of implication \ref{p:globalchar:ii} $\fol$ \ref{p:globalchar:iv}, let $\Omega ' = \bigcap \overline B ( x-rv_x,r)$ where the intersection is over all $x \in \partial \Omega$ and all unit normals $v_x$ to $\Omega$ at $x$. By hypothesis, $\Omega \subseteq \Omega '$. If $y \in \Omega '\setminus \Omega$, then by \cite[3.1.2, Theorem 2 and Corollary 2]{Ioffe}, there exists a supporting half-space $H^-$ of $\Omega$ given by a unit normal $v_z$ to $\Omega$ at some $z \in \partial \Omega$ such that $y \notin H^-$. Since $\overline B(z-rv_z,r) \subseteq H^-$, it follows that $y \in H^-$ which is a contradiction. Implication \ref{p:globalchar:iv} $\fol$ \ref{p:globalchar:v} directly follows from the properties of the modulus $\delta_\Omega$ given at the beginning of this section, so \ref{p:globalchar:v} $\fol$ \ref{p:globalchar:i} is left to prove. To economize on notation, we denote by $A_{(x,y)}$ a short arc of radius $r$ joining two elements $x$ and $y$. Let $x,y \in \Omega$. By hypothesis $z \in \Omega$ whenever $z$ is the midpoint of a short arc $A:=A_{(x,y)}$ of radius $r$. Without loss of generality, we reduce our arguments to the two-dimensional Euclidean plane containing $\conv A$ and assume $x+y = 0$. Let $z$ be the midpoint of $A$. Given any point $a \in A$ we now construct a sequence $(p_n,q_n)_{n \in \N}$ on $A$ such that $\lim_{n\to \infty} p_n = \lim_{n \to \infty} q_n = a$. We define $q_1:=z$ and $p_1 \in \{x,y\}$ such that $a \in A_{(p_1,q_1)}$. Applying the same argument as before to $p_1$ and $q_1$ in place of $x$ and $y$ we determine $q_2 \in \Omega$ as the midpoint of the short arc $A_{(p_1,q_1)} \subseteq A$. Inductively, we determine $q_{n+1}$ as the midpoint of $A_{(p_{n},q_{n})} \subseteq A$, and $p_{n+1} \in \{p_n,q_n\}$ such that $a \in A_{(p_{n+1},q_{n+1})}$. Then $\|p_n-a\|^2 \leq \|p_n-p_{n+1}\|^2 \leq \|x-y\|^2/2^n$, so the sequence converges to $a$. As $\Omega$ is closed, we conclude that $A \subseteq \Omega$ and hence, Proposition \ref{Reissigprop} implies \ref{p:globalchar:i}. 
\end{proof}
\section{The theorems of Mayer in Hilbert space}
\label{s:MayersTheorems}
We give simple proofs of Theorems %
\ref{rconvexweaklyrconvex} and \ref{rconvsphersupp} in Hilbert space %
and we provide an auxiliary corollary to %
Theorem \ref{rconvsphersupp} in this section. %
The latter as well as Theorem \ref{rconvexweaklyrconvex} will be applied in 
Section \ref{s:ProfOfMainResults}.%
\begin{proof}[Proof of Theorem \ref{rconvexweaklyrconvex}]
It is clear that $r$-convexity implies local $r$-convexity. For the converse, first observe that $\Omega$ is convex by the analogous theorem of Tietze and Nakajima \cite[Theorem 4.4]{Valentine64} as it is locally convex. 
We will show below that $\Omega$ possesses the arc property for radius $r$. Assume there exists a short arc $A \subseteq \hil$ of radius $r$ joining $x,y \in \Omega$ such that $A \nsubseteq \Omega$. Let $x+y = 0$ without loss of generality, and let $\unitvector$ such that $\innerProd{v}{x-y} = 0$ and such that some scalar multiple of $v$ meets $A$. We define $\delta=\delta_{\ball (0,r)}(\|x-y\|)$ and
\gll
z = \arg \max \{ \| w\| \, | \, w \in \Omega \cap v\cdot[0,\delta]\}.
\gle 
If both $(x,z)$ and $(y,z)$ possess the arc property for $\Omega$
and $r$, then $z\notin A$, since $A\subseteq \Omega$
otherwise. However, $\|z\| < \delta$ implies that there exist $w \in
\sline{(x-y)}$ and $\mu > \|z\|$ such that $w+\mu v, -w+\mu v \in
\Omega$, which together with the convexity of $\Omega$ contradicts the
choice of $z$. Therefore, one of the pairs $(x,z)$ and $(y,z)$ does not possess the arc property for $\Omega$ and $r$. Hence, we may construct a sequence $(p_n,q_n)_{n \in \N} \in \Omega \times \Omega$ for which $\lim_{n \to \infty} p_n = \lim_{n \to \infty} q_n = a \in \Omega$ (as in the proof of Proposition \ref{p:globalchar}) and $(p_n,q_n)$ does not possess the arc property for any $n \in \N$. This contradicts the local $r$-convexity of $\Omega$. 
\end{proof} 
\begin{proof}[Proof of Theorem \ref{rconvsphersupp}]
First we note that $\Omega$ is convex by the theorem of Tietze
\cite[Theorem 4.10]{Valentine64}. It is enough to prove that every pair of points $x,y \in \Omega$ possesses the arc property for $\closure \Omega$ and radius $r$. Indeed, if the latter is true then $\closure \Omega$ is $r$-convex, and hence so is $\Omega$ as $\Omega = \interior( \closure \Omega)$. Let $x,y \in \Omega$, $x\neq y$, and let $A$ be a short arc of radius $r$ joining $x$ and $y$. Let $z$ be the midpoint of $A$. Consider the set 
\gll
\mathcal{T} =\set{t \in [0,1]}{A_t \subseteq \closure \Omega}
\gle
where $A_0 := \conv\{x,y\}$, and if $t>0$, then $A_t$ is the short arc joining $x$ and
$y$ such that $tz+(1-t)\cdot (x+y)/2 \in A_t$. As $\Omega$ is open we
conclude that $[0,t_0] \subseteq \mathcal{T}$ for some $t_0 >0$. Let $t \in [t_0,1) \cap \mathcal{T}$ and assume that there exists $w \in A_t$ such that $w \in \partial \Omega$. Then there exists a neighborhood $U$ of $w$ such that $U \cap \closure\Omega \subseteq \ball(w-rv,r)$ where $v$ is a unit normal to $\Omega$ at $w$. This is a contradiction to $A_t \subseteq \closure \Omega$ since the radius of $A_t$ exceeds $r$. %
Hence, $A_t \subseteq \Omega$, so $\mathcal{T}$ is open as $A_t$ is compact. Since $\mathcal{T}$ is closed by the closedness of $\closure \Omega$ the claim follows since $A = A_1 \subseteq \closure \Omega$. 
\end{proof}
We remark that Theorem \ref{rconvsphersupp} proves that conditions \ref{p:globalchar:i}-\ref{p:globalchar:v} of Proposition \ref{p:globalchar} are equivalent to:
\begin{asparaenum}[\itshape (i')]
\setcounter{enumi}{1}
\item
\label{p:globalchar:iii}
\textit{$\Omega$ is convex and $\Omega \subseteq \ball(x-rv_x,r)$ for all $x \in \partial \Omega$ and some unit normal $v_x$ to $\Omega$ at $x$.}
\end{asparaenum}
~

Theorem \ref{rconvsphersupp} particularly applies if $\Omega$ is the closure of an open, connected subset of $X$. Therefore we have the following consequence.
\begin{Cor}
Let $r>0$ and $\Omega \subseteq \hil$ be closed and convex. Assume that for every $x \in \partial \Omega$ there exists a neighborhood $U$ of $x$ such that for all $y \in U \cap \Omega$ the pair $(x,y)$ possesses the arc property for $U \cap \Omega$ and $r$. Then $\Omega$ is $r$-convex. 
\label{xarcporp}
\end{Cor}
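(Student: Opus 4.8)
The plan is to convert the local arc property into a local spherical-support condition, pass to the open set $V:=\interior\Omega$, and invoke Theorem \ref{rconvsphersupp}. We may assume $\emptyset\neq\Omega\neq X$ and that $\Omega$ is not a singleton, since in each of these cases $\Omega$ is $r$-convex directly from the definition.

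\emph{Step 1: $\interior\Omega\neq\emptyset$.} Pick $x\in\partial\Omega$ and let $U$ be a neighbourhood of $x$ as in the hypothesis. Since $\Omega$ is convex, contains $x$, and is not a singleton, it contains a non-degenerate segment with endpoint $x$, so there is $y\in U\cap\Omega$ with $0<\eps:=\|x-y\|\leq 2r$. For every $v\in X$ with $\|v\|=1$ and $\innerProd{v}{x-y}=0$, the point $(x+y)/2+\delta v$ with $\delta:=\delta_{\ball(0,r)}(\eps)>0$ is the midpoint of a short arc of radius $r$ joining $x$ and $y$, hence lies in $U\cap\Omega\subseteq\Omega$ because $(x,y)$ possesses the arc property for $U\cap\Omega$ and $r$. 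Thus $\Omega$ contains the codimension-one ball $\{(x+y)/2+w \mid w\in X,\ \innerProd{w}{x-y}=0,\ \|w\|\leq\delta\}$, and by convexity the convex hull of this ball with $\{x,y\}$; the latter contains a neighbourhood of $(x+y)/2$, since every point sufficiently close to $(x+y)/2$ is a convex combination of a point of the ball and a point of $\conv\{x,y\}$. Hence $\interior\Omega\neq\emptyset$.

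\emph{Step 2: local spherical support of $V$.} Put $V:=\interior\Omega$; then $V$ is open and connected, $\op{cl} V=\Omega$, and $\partial V=\partial\Omega$. Fix $x\in\partial V$, let $U$ be as in the hypothesis, and pick $\eps\leq 2r$ with $\ball(x,\eps)\subseteq U$. The set $W:=\op{cl}(B(x,\eps/2)\cap\Omega)$ is closed and convex with $x\in\partial W$ and $\interior W\supseteq B(x,\eps/2)\cap V\neq\emptyset$, so there is a unit normal $v_x$ to $W$ at $x$. If $y\in B(x,\eps/2)\cap\Omega$, then every short arc of radius $r$ joining $x$ and $y$ has diameter $\|x-y\|<\eps/2$ and is therefore contained in $B(x,\eps/2)\cap\Omega\subseteq W$, using that $(x,y)$ possesses the arc property for $U\cap\Omega$ and $r$; hence $(x,y)$ possesses the arc property for $W$ and $r$. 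As $\|x-y\|<\eps/2\leq r$, Lemma \ref{LemmaMayer} applied to the closed convex set $W$ gives $y\in\ball(x-rv_x,r)$ for all $y\in B(x,\eps/2)\cap\Omega$, so a fortiori $B(x,\eps/2)\cap V\subseteq\ball(x-rv_x,r)$. Thus $V$ is spherically supported with radius $r$ at each of its boundary points locally.

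\emph{Step 3: conclusion.} By Theorem \ref{rconvsphersupp} the open connected set $V$ is $r$-convex; in fact its proof shows that the local spherical support from Step 2 already forces $\op{cl} V=\Omega$ to be $r$-convex, which is the assertion. (Verifying condition (ii) of Proposition \ref{p:globalchar} for $\Omega$ directly seems harder, since local spherical support together with convexity does not by itself give \emph{global} spherical support.) I expect Step 1 to be the main obstacle: it is the only place where the qualitative hypothesis is turned into quantitative information, and it is also what supplies the non-empty interior of $\Omega$ and --- by the same argument applied to $W$ in Step 2 --- of $W$, which is needed for the supporting hyperplane used through Lemma \ref{LemmaMayer}. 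The remaining steps are routine, and the passage from ``$V$ is $r$-convex'' to ``$\Omega=\op{cl} V$ is $r$-convex'' is already contained in the proof of Theorem \ref{rconvsphersupp}.
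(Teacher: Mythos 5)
Your proof is correct and follows essentially the same route as the paper's: both reduce the claim to Theorem \ref{rconvsphersupp} applied to $\op{int}\Omega$ and use Lemma \ref{LemmaMayer} to convert the local arc property into local spherical support, the only difference being that the paper argues by contradiction via the contrapositive of that lemma while you argue directly. Your Steps 1 and 2 simply make explicit two details the paper leaves implicit, namely that $\op{int}\Omega\neq\emptyset$ and that Lemma \ref{LemmaMayer} should be applied to a closed convex localization $W$ of $U\cap\Omega$.
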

\begin{proof}
Without loss of generality, $\Omega$ is neither empty nor a singleton. Then by hypothesis, $\interior \Omega \neq
\emptyset$, that is, $\Omega =\closure( \interior \Omega)$. Suppose
that $\Omega$ is not $r$-convex. Then $\Omega \neq \hil$ and there exists
$x \in \partial \Omega$ such that $U \cap \Omega \nsubseteq \ball
(x-rv,r)$ for any unit normal $v$ to $U \cap \Omega$ at $x$ and any
neighborhood $U$ of $x$ as in the hypothesis. Since $\Omega$ is convex
there exists a unit normal $v$ to $U \cap \Omega$ at $x$, so there exists
some $y \in U \cap \Omega$ such that $y \notin \ball(x-rv,r)$. By the
contrapositive of Lemma \ref{LemmaMayer}, the pair $(x,y)$ does not
possess the arc property for $U \cap \Omega$ and $r$, which is a contradiction.
\end{proof}
\section{Proof of the main results}
\label{s:ProfOfMainResults}
In this Section, we shall prove the main results presented in Section \ref{s:results}. Some auxiliary results are also proved.
To begin with, we first establish Proposition \ref{GuntherLemma}.
\begin{proof}[Proof of Proposition \ref{GuntherLemma}]
\newcounter{one}
\renewcommand{\point}{y_p}
\renewcommand{\indexx}{j}
The necessity of the condition is obvious. The proof of its sufficiency is divided into several steps. %
We first prove that if a pair of points $x,y \in \Omega$ is polygonally connected then %
$\conv\{x,y\} \subseteq \Omega$ (Claim 1 up to Claim 3). %
For this purpose it is enough to assume that there exists $z \in \Omega$ satisfying $\conv\{x,z\} \cup \conv\{y,z\} \subseteq \Omega$. Without loss of generality, let $x,y,z$ be not collinear, and $z=0$. Define $\Delta= \conv\{0,x,y\}$. We consider the set
\gll
\mathcal{T} = \{ t \in [0,1] \; | \; \conv\{tx,ty\} \subseteq \Omega \},
\gle
and reduce the remaining arguments to the two-dimensional plane that contains $\Delta$. 
\\
\refstepcounter{one}\label{claimatzero}\textsf{Claim \arabic{one}}: There exists $t_0 \in (0,1]$ such that $[0,t_0] \subseteq \mathcal{T}$. \\
Since $\|t(x-y)\|$ is %
monotonically increasing in $t$, it follows by
property \ref{eq:sequence:conv} that there exists $t_1 \in
\mathcal{T}$ such that, without loss of generality, $ \eps_1 = \|t_1(x-y)\|$ and
$\eps_1 \leq {\|x\|}/{2}$. We choose $t_0 \in (0,t_1]$ such that $\|t_0 y \| \leq \eps_1$. %
Let $p \in \conv\{0,t_0x,t_0y\}$. Obviously, there exists $t_p \leq t_0$ such that $p \in \conv\{t_px,t_py\}$. Consider the family of line segments $$\ell(\tau)=\set{(1-\lambda)p+\lambda [ (1-\tau)t_px + \tau x] }{\lambda \in \R} \cap \Delta, \ \tau \in [0,1].$$ The line segment $\ell(0)=\conv\{t_px,t_py\}$ has length less than or equal to $\varepsilon_1$. Next, let $\point$ be the unique point of $\ell(1) \cap \conv\{0,y\}$. Then $\ell(1) = \conv\{x,\point\}$ has length $\|x-\point\|$ which is not less than $\| x\| - \|\point\| \geq 2\eps_1 - \eps_1=\eps_1$ as $\|\point\| \leq \|t_py\|$. Since the length of the line segments $\ell(\tau)$ varies continuously as $\tau$ varies continuously, it follows that there exists a line segment of length $\eps_1$ with end points on $\conv\{0,x\} \cup \conv\{0,y\}$. Hence, $p \in \Omega$. \\
\refstepcounter{one}\label{claimopen}\textsf{Claim \arabic{one}}: If $t \in \mathcal{T}\setminus \{1\}$ then $\conv\{tx,ty\} \setminus \{tx,ty\} \subseteq \interior \Omega$.\renewcommand{\point}{p} \\
Let $\point = tx$, $q = ty$ and without loss of generality $t\geq t_0$, $\|\point-q\|=1$. Using Claim \ref{claimatzero} it follows that there exists $\indexx \in \N$ such that $\eps_j \leq 2/3$ and for $p_1=\point + \frac{\eps_\indexx}{2}(q-\point)$ it holds that $\conv\{\point,p_1\} \setminus \{\point\} \subseteq \interior \Omega$. Consider the points $p_n = \point + n \cdot \frac{\eps_\indexx}{2} (q-\point)$ for $n\in \N \cup \{0\}$ such that $p_n \in \Delta$. In particular, $p_3 \in \Delta$. If $p_{n-2} \in \interior \Omega$, $n \geq 3$, and if $U \subseteq \Omega$ is a neighborhood of $p_{n-2}$, then $\|p_n - p_{n-2}\| = \eps_{\indexx}$ and therefore
$$U \cap \boundary B(p_{n},\eps_\indexx) \subseteq \Omega.$$
So $\conv\{U \cap \boundary B(p_{n},\eps_\indexx),p_{n}\} \subseteq \Omega$, and %
thus inductively $\conv\{\point,p_{n-1}\} \setminus \{p\} \subseteq \interior \Omega$. %
By interchanging the role of $\point$ and $q$ the claim follows. \\ 
\refstepcounter{one}\label{claimTopen}\textsf{Claim \arabic{one}}: $\mathcal{T}=[0,1]$. \\
It is enough to prove that $\mathcal{T}$ is both closed and open in $[0,1]$.
Let $t \in \mathcal{T}$. By Claim \ref{claimatzero} we choose a neighborhood 
$U_{p} \subseteq \hil$ of $p=tx$ such that 
$U_{p} \cap \Delta \subseteq \Omega$. Interchanging the role of $tx$ and $ty$ 
and Claim \ref{claimopen} yield an open 
covering of $\conv\{tx,ty\}$ relative to the subspace topology on $\Delta$ so the compactness of $\conv\{tx,ty\}$ easily implies 
that $\mathcal{T}$ is open. $\mathcal{T}$ is closed as $\Omega$ is closed. \\
\refstepcounter{one}\label{claimconvex}\textsf{Claim \arabic{one}}: $\Omega$ is convex. \\
Let $p \in \Omega$. Without loss of generality, there exists one more point $q \in \Omega$ such that $\varepsilon_1 \leq \|p-q\|$. Since $\Omega$ is connected $\partial B(p,\varepsilon_1) \cap \Omega \neq \emptyset$, so $\|p-q\|=\varepsilon_1$ without loss of generality. Next, without loss of generality $\varepsilon_2 \leq \frac{\eps_1}{2}$. 
We choose $x \in \ball(p,\varepsilon_2/2)$. 
We consider the continuous map $f$ given by $f(t)= \|x-(tp+(1-t)q)\|$. %
Then $f(1) \leq \frac{1}2\varepsilon_2$ and $f(0) = \|x-q\| \geq \eps_1 - \frac{1}{2}\eps_2 \geq \frac{3}{2}\eps_2 > \varepsilon_2$. %
Hence, there exists $y \in \conv\{p,q\}$ such that $\|x-y\| = \varepsilon_2$. %
Thus, all pairs of points in $\ball(p,\varepsilon_2/2) \cap \Omega$ are polygonally connected in $\Omega$. %
So %
$\ball(p,\varepsilon_2/2) \cap \Omega$ is convex by Claim \ref{claimTopen}, and hence $\Omega$ is locally convex. Convexity of $\Omega$ follows therefore from the theorem of Tietze and Nakajima \cite[Theorem 4.4]{Valentine64}.
\end{proof}
As the reader may have noted, the proof of Proposition \ref{GuntherLemma} works also in real Banach spaces. So for a closed, connected and non-convex subset $\Omega$ of a real Banach space $E$ with norm $\|\cdot\|_E$ we conclude that
\gll
\inf\{ \varepsilon>0 \, | \, x, y \in \Omega, \|x-y\|_E = \varepsilon \Longrightarrow \conv\{x,y\} \subseteq \Omega\} > 0 .
\gle
The analogous result for strong convexity is Proposition %
\ref{sequencearcprop} which we will prove below.
\begin{proof}[Proof of Proposition \ref{sequencearcprop}]
Without loss of generality, let $\Omega$ consist of more than one point, and $\Omega \neq \hil$. The necessity of the condition is obvious; we prove its sufficiency. We first prove that $\Omega$ is convex. Let $x,y \in \Omega$ such that $\|x-y\|=\eps_i$ for some $i \in \N$. By Proposition \ref{GuntherLemma} it is enough to show that $\conv\{x,y\} \subseteq \Omega$. We consider a short arc $A$ of radius $r$ joining $x$ and $y$. Then $A\subseteq \Omega$, and without loss of generality, let $x+y = 0$. Let $z$ be the midpoint of $A$ and consider the set 
\renewcommand{\indexx}{j}
\gll
\mathcal{T} = \set{ t \in [0,1)}{ \ell(t) \subseteq \Omega },
\gle
where $\ell(t) = \set{(1-t)z + \lambda (x-y)}{\lambda \in \R} \cap
\op{conv} A$. Obviously, there exist $p_t,q_t \in A$ such that
$\conv\{p_t,q_t\} = \ell(t)$. Choose some $\eps_\indexx \leq \|x-z\|$
and first let $t<t_0:= \delta_{\ball(0,r)}(\eps_j)/\|z\|$. Consider the
family of short arcs $\arc{p,q}$ of radius $r$ joining points $p,q \in
A$ such that $\|p-q\|=\eps_j$ and $\arc{p,q} \nsubseteq A$. Hence, we
see that $\ell(t) \subseteq \Omega$, that is, $[0,t_0) \subseteq
\mathcal{T}$. Now let $t \in \mathcal{T} \cap [t_0,1)$. The family of
short arcs considered previously also shows that $p_t$ and $q_t$ are interior points of $\Omega \cap \conv A$ relative to the subspace topology on $\conv A$. Considering the family of short arcs of radius $r$ joining points $p',q' \in \ell(t)$ yields the implication that $\ell(t) \setminus \{p_t,q_t\} \subseteq \interior \Omega$. Since $\ell(t)$ is compact we conclude that $\mathcal{T}$ is open. As it is also closed by the closedness of $\Omega$, we have $\mathcal{T} = [0,1)$. As $\Omega$ is closed, it is convex. \\ \renewcommand{\indexx}{k}
\renewcommand{\point}{n} Finally, we prove that $\Omega$ is $r$-convex. Suppose there exist $x,y \in \Omega$ not possessing the arc property for $\Omega$ and $r$. To contradict this assumption, we reduce the arguments to the affine plane which contains $x,y$ and a short arc of radius $r$ joining $x$ and $y$ which is not contained in $\Omega$. As $\Omega$ is convex, it contains the points $x_\alpha = x + \alpha(y-x)$ and $y_\alpha=y-\alpha(y-x)$ for all $\alpha \in [0,{1}/{2}]$. We consider the non-empty set $$\mathcal{A}=\{ \alpha \in [0,1/2) \ | \ (x_\alpha,y_\alpha) \text{ possesses the arc property for } \Omega \text{ and } r \}.$$
$\mathcal{A}$ is closed by the closedness of $\Omega$. Defining $\beta = \inf \mathcal{A}$ it follows that $\beta \in \mathcal{A}$, and hence $\beta > 0$. \\
\textsf{Claim}: For any short arc $A$ of radius $r$ joining $x_\beta$ and $y_\beta$, it holds that $A \subseteq \interior \Omega$. \\ First, we note that $x_\beta$ is an interior point of $\Omega$ as follows: We choose a line segment contained in $\conv\{x,y\}$, containing $x_\beta$, having end points different from $x_\beta$ (this is possible since $\beta >0$) and having length $\eps_\nu$ for some $\nu \in \N$. By the hypothesis of property \ref{eq:sequence:arcprop} and as $\Omega$ is convex it easily follows that $x_\beta \in \interior \Omega$. By the same argument, $y_\beta \in \interior \Omega$. Without loss of generality, we assume $2\eps_1 \leq \|x_\beta - y_\beta \|$. For $p,q \in \Omega$ let $A_{(p,q)}$ denote a short arc of radius $r$ joining $p$ and $q$. For any $\point \in \N$, $n \geq 2$, let $p_0,\ldots,p_{\point} \in A=A_{(x_\beta,y_\beta)}$ such that $\|p_{\indexx} - p_{\indexx-2} \| = \varepsilon_1$ for $2\leq \indexx \leq \point$, $p_0 = x_\beta$, and let $p_1$ be the midpoint of $\arc{p_0,p_2}$ and $\|p_\point -y_\beta\| < \varepsilon_1$. If $p_{\indexx-2} \in \interior \Omega$ and if $U \subseteq \Omega$ is a neighborhood of $p_{\indexx-2}$, then it follows that $$\set{p \in \arc{u,p_\indexx}}{ \ u \in U \cap \boundary B(p_{\indexx},\eps_1)} \subseteq \Omega.$$
Thus, we have proved
\gll
p_{\indexx-2} \in \interior \Omega \ \Rightarrow \ \arc{p_{\indexx-2},p_{\indexx}} \setminus \{p_{\indexx}\} \subseteq \interior \Omega.
\gle
Inductively, it follows that $\arc{x_\beta, p_{n-1}} \subseteq \interior \Omega$. Interchanging the role of $x_\beta$ and $y_\beta$ we conclude that $A \subseteq \interior \Omega$. \\ Moreover, we conclude that $L\subseteq \op{int}\Omega$ where $L$ denotes the intersection on the left hand side of \ref{linse} with %
$x_\beta$, $y_\beta$ in place of $x$, $y$. We note that the set $L$ is compact as it is considered in a 2-dimensional plane. But the compactness of $L$ contradicts our definition of $\beta$.
\end{proof}
In the sequel we will need an easy but extremely useful remark, namely the following.
\begin{Lemma}
Let $\Omega \subseteq \hil$ be closed and let $(r_i)_{i \in \N}$ be a sequence of positive real numbers which converges to $r>0$. If $\Omega$ is $r_i$-convex for all $i \in \N$ then $\Omega$ is $r$-convex.
\label{rconv:sequence}
\end{Lemma}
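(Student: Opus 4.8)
The plan is to deduce the lemma from Proposition \ref{p:globalchar}, exploiting that the inclusion of a set in a closed ball behaves continuously under perturbation of the ball's radius and centre. First I would note that, since $\Omega$ is closed and $r_1$-convex, the implication \ref{p:globalchar:i} $\fol$ \ref{p:globalchar:ii} of Proposition \ref{p:globalchar} already gives that $\Omega$ is convex; and convexity of $\Omega$, as well as the set of unit normals to $\Omega$ at a boundary point, does not refer to any radius.

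Then I would fix a boundary point $x \in \partial \Omega$ together with a unit normal $v_x$ to $\Omega$ at $x$ (if no such pair exists, condition \ref{p:globalchar:ii} is vacuously satisfied and there is nothing to do for that $x$). For every $i \in \N$, the implication \ref{p:globalchar:i} $\fol$ \ref{p:globalchar:ii} of Proposition \ref{p:globalchar}, applied with radius $r_i$ and with the fixed pair $(x,v_x)$, yields $\Omega \subseteq \ball(x - r_i v_x, r_i)$, that is, $\|y - x + r_i v_x\| \le r_i$ for all $y \in \Omega$. Now I would fix $y \in \Omega$ and let $i \to \infty$: the left-hand side tends to $\|y - x + r v_x\|$ and the right-hand side tends to $r$, so $\|y - x + r v_x\| \le r$. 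Since $y \in \Omega$ was arbitrary, $\Omega \subseteq \ball(x - r v_x, r)$; since $x \in \partial \Omega$ and the unit normal $v_x$ were arbitrary and $\Omega$ is convex, $\Omega$ satisfies condition \ref{p:globalchar:ii} of Proposition \ref{p:globalchar} with radius $r$. Finally, the implication \ref{p:globalchar:ii} $\fol$ \ref{p:globalchar:i} shows that $\Omega$ is $r$-convex.

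The main (and only) point requiring care is the legitimacy of passing to the limit, and this is routine here precisely because the same unit normal $v_x$ serves for every $r_i$, so the inequalities $\|y - x + r_i v_x\| \le r_i$ form a convergent family of genuine numerical inequalities. I note that one could instead argue through the modulus $\delta_\Omega$ and condition \ref{p:globalchar:v}, or approximate every short arc of radius $r$ joining two points of $\Omega$ by short arcs of radius $r_i$ joining the same two points and invoke Proposition \ref{Reissigprop}; but the former forces a separate treatment of the endpoint $\varepsilon = 2r$, where the short arc degenerates to a semicircle, and the latter needs a Hausdorff-convergence argument for arcs, so routing through condition \ref{p:globalchar:ii} is the most economical.
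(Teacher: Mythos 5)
Your proof is correct. The paper itself offers no proof of Lemma \ref{rconv:sequence} (it is introduced only as ``an easy but extremely useful remark''), so there is no official argument to compare against; your route through the equivalence \ref{p:globalchar:i} $\Leftrightarrow$ \ref{p:globalchar:ii} of Proposition \ref{p:globalchar} is a clean way to supply one. The two steps that need checking both hold: convexity of $\Omega$ follows already from $r_1$-convexity via \ref{p:globalchar:i} $\fol$ \ref{p:globalchar:ii}, and the passage to the limit in $\|y-x+r_i v_x\|\le r_i$ is legitimate precisely for the reason you state, namely that the set of unit normals to the convex set $\Omega$ at $x$ does not depend on the radius, so the same $v_x$ serves for every $i$ and one is simply taking limits in a family of numerical inequalities. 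Your closing remark is also apt: the alternative route via condition \ref{p:globalchar:v} would require extra care for $\eps$ near and beyond $2\min(r_i,r)$, where $r_i-\sqrt{r_i^2-\eps^2/4}$ ceases to be defined, so condition \ref{p:globalchar:ii} is indeed the most economical intermediary.
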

\begin{Prop}
Let $r>0$ and $\Omega \subseteq \hil$ be closed and connected. Then $\Omega$ is $r$-convex if and only if 
\begin{equation}
\liminf_{{\eps>0,}{\varepsilon \to 0}}\limits \Quot{\varepsilon} \geq {1}/{(8r)}.
\label{limgl}
\end{equation}
\label{strongconvexityliminf}
\end{Prop}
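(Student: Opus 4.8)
The plan is to prove the two implications of Proposition~\ref{strongconvexityliminf} separately, using the characterizations already at hand.

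First, suppose $\Omega$ is $r$-convex. By Proposition~\ref{p:globalchar}, implication \ref{p:globalchar:i}~$\fol$~\ref{p:globalchar:v}, we have $\delta_\Omega(\eps) \geq r - \sqrt{r^2 - \eps^2/4}$ for all $\eps$. Dividing by $\eps^2$ and using the elementary identity $r - \sqrt{r^2 - \eps^2/4} = (\eps^2/4)/(r + \sqrt{r^2-\eps^2/4})$, we get $\Quot{\eps} \geq \frac{1}{4(r + \sqrt{r^2-\eps^2/4})}$, whose limit as $\eps \to 0^+$ is $1/(8r)$. Hence $\liminf_{\eps \to 0^+} \Quot{\eps} \geq 1/(8r)$, which is \ref{limgl}.

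For the converse, assume \ref{limgl} holds. I would first note that $\Omega$ is convex: for any $\eps$ small enough that $\delta_\Omega(\eps)/\eps^2 > 0$, the set on the left-hand side of \ref{linse} for points $x,y$ with $\|x-y\|=\eps$ is nonempty, and in particular $\conv\{x,y\}\subseteq \ball((x+y)/2,\delta_\Omega(\eps))\subseteq\Omega$, so by Proposition~\ref{GuntherLemma} (applied with a sequence $\eps_i\to 0$ of such small values) $\Omega$ is convex. Now the strategy is to produce, for each $\rho < r$, a sequence $(\eps_i)_{i\in\N}$ of positive reals tending to $0$ such that whenever $x,y\in\Omega$ with $\|x-y\|=\eps_i$, the boundary of the left-hand side of \ref{linse} with radius $\rho$ lies in $\Omega$; then Proposition~\ref{sequencearcprop} gives $\rho$-convexity of $\Omega$ for every $\rho<r$, and Lemma~\ref{rconv:sequence} upgrades this to $r$-convexity. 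To get such a sequence: fix $\rho<r$. Since $\liminf_{\eps\to0^+}\Quot{\eps}\geq 1/(8r) > 1/(8\rho)\cdot\frac{\rho}{r}$... more carefully, since $\Quott{\ball(0,\rho)}{\eps}\to 1/(8\rho)$ and $1/(8r)<1/(8\rho)$ is false in the wrong direction, I must instead compare against $\rho$: for $\eps$ small, $\rho - \sqrt{\rho^2-\eps^2/4} = (\eps^2/4)/(\rho+\sqrt{\rho^2-\eps^2/4})$, which is asymptotically $\eps^2/(8\rho)$. Since $1/(8r) < 1/(8\rho)$, the hypothesis $\liminf \Quot{\eps}\geq 1/(8r)$ does \emph{not} immediately beat $\eps^2/(8\rho)$; rather, one needs $\delta_\Omega(\eps)\geq \rho - \sqrt{\rho^2-\eps^2/4}$ for a sequence of $\eps\to 0$, and since the right side is $\sim\eps^2/(8\rho) < \eps^2/(8r) \leq \delta_\Omega(\eps)$ for all sufficiently small $\eps$ (using the liminf), this holds for \emph{all} small $\eps$. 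So for every $\eps$ in some interval $(0,\eps_0)$ we have $\ball((x+y)/2,\rho-\sqrt{\rho^2-\eps^2/4})\subseteq\Omega$ whenever $x,y\in\Omega$, $\|x-y\|=\eps$; in particular the midpoint of any short arc of radius $\rho$ joining such $x,y$ lies in $\Omega$, and a short-arc subdivision argument exactly as in the proof of Proposition~\ref{p:globalchar}, implication \ref{p:globalchar:v}~$\fol$~\ref{p:globalchar:i}, together with closedness of $\Omega$, shows that the whole boundary of the lens (indeed the whole short arc) lies in $\Omega$. Taking $\eps_i = \eps_0/i$ furnishes the sequence, and Proposition~\ref{sequencearcprop} yields $\rho$-convexity.

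The main obstacle I anticipate is the bookkeeping around the asymptotics: one must be careful that $\liminf_{\eps\to0^+}\delta_\Omega(\eps)/\eps^2 \geq 1/(8r)$ genuinely forces $\delta_\Omega(\eps)\geq \rho-\sqrt{\rho^2-\eps^2/4}$ for all sufficiently small $\eps$ (not merely along a subsequence) for each fixed $\rho<r$ — this works precisely because the target function is $\sim\eps^2/(8\rho)$ with $1/(8\rho) > 1/(8r)$... wait, that inequality is backwards, so in fact I need the liminf to exceed $1/(8\rho)$, which it does \emph{not}. The correct reading: $\rho<r$ makes $1/(8\rho) > 1/(8r)$, so $\eps^2/(8\rho) > \eps^2/(8r)$, and the liminf only guarantees $\delta_\Omega(\eps)\gtrsim \eps^2/(8r)$, which is \emph{smaller}. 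Hence one should instead take $\rho>r$ is wrong too. The resolution is to reverse: show $r$-convexity directly by establishing $\rho$-convexity for all $\rho>r$ is also not what Lemma~\ref{rconv:sequence} wants. So the genuinely delicate point — and the step I'd flag as the crux — is arranging the comparison so that Proposition~\ref{sequencearcprop} applies: one fixes $\rho > r$, observes $\rho-\sqrt{\rho^2-\eps^2/4}\sim\eps^2/(8\rho) < \eps^2/(8r) \leq (1+o(1))\delta_\Omega(\eps)$, concludes $\delta_\Omega(\eps)\geq\rho-\sqrt{\rho^2-\eps^2/4}$ for small $\eps$, obtains $\rho$-convexity for every $\rho>r$, and then Lemma~\ref{rconv:sequence} with $\rho_i\downarrow r$ gives $r$-convexity of $\Omega$. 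Getting the direction of the inequality right here is the one place where care is essential; everything else is assembly of the cited results.
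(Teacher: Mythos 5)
Your proof is essentially correct once it settles, in its final paragraph, on the right orientation of the comparison (fix $\rho>r$, deduce $\delta_\Omega(\eps)\geq\rho-\sqrt{\rho^2-\eps^2/4}$ for all sufficiently small $\eps$, conclude $\rho$-convexity, and let $\rho\downarrow r$ via Lemma \ref{rconv:sequence}); the forward direction via Proposition \ref{p:globalchar}\,\ref{p:globalchar:v} is exactly the paper's. For the converse you globalize differently from the paper: you run the short-arc subdivision from the proof of \ref{p:globalchar:v}~$\fol$~\ref{p:globalchar:i} to put every short arc of radius $\rho$ joining close points into $\Omega$, and then invoke Proposition \ref{sequencearcprop}; the paper instead restricts to $U\cap\Omega$ with $U=\ball(x,\eps_0)$, where the modulus inequality holds for \emph{all} relevant $\eps$ because the diameter of $U\cap\Omega$ is small, applies Proposition \ref{p:globalchar}\,\ref{p:globalchar:v} to get local $s$-convexity, and concludes with Mayer's Theorem \ref{rconvexweaklyrconvex}. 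Both routes are legitimate; yours leans on the heavier Proposition \ref{sequencearcprop} but avoids the localization step, while the paper's avoids re-running the subdivision argument at the cost of checking that the modulus estimate survives the passage from $\Omega$ to $U\cap\Omega$.

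Two small corrections. First, your preliminary derivation of convexity is wrongly justified: $\conv\{x,y\}\not\subseteq\ball((x+y)/2,\delta_\Omega(\eps))$ in general, since the ball has radius of order $\eps^2$ while the segment has length $\eps$ (the lens on the left-hand side of \ref{linse} does contain $\conv\{x,y\}$, but that is a different set). This could be repaired by iterated midpoint bisection using $\delta_\Omega(\eps)>0$ for all small $\eps$, or by citing Proposition \ref{GuntherLemma} after such a bisection — but in fact the step is dispensable, since Proposition \ref{sequencearcprop} requires only that $\Omega$ be closed and connected. Second, the long back-and-forth about whether to take $\rho<r$ or $\rho>r$ should be excised; only the final, correct version belongs in a proof.
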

\begin{proof}
If $\Omega$ is $r$-convex, then \ref{limgl} follows from Proposition \ref{p:globalchar} \ref{p:globalchar:v}. For the converse, if \ref{limgl} holds, note that for any $s>r$ and $s_0 := (r+s)/2$ there exists $\eps_0>0$ such that $\delta_\Omega(\varepsilon)\geq {\eps^2}/{(8s_0)}\geq s - \sqrt{s^2-\eps^2/4}$ for all $\eps \in [0,2\eps_0]$. Indeed, the remarks at the beginning of Section \ref{s:prelims} imply the last inequality. So for $x \in \Omega$ consider $U : = \ball(x,\eps_0)$. Then 
\gll
\delta_{U \cap \Omega}(\varepsilon) \geq s - \sqrt{s^2-\eps^2/4}.
\gle
Thus, $U \cap \Omega$ is $s$-convex by Proposition \ref{p:globalchar} \ref{p:globalchar:v}, and therefore $\Omega$ is $s$-convex by Theorem \ref{rconvexweaklyrconvex} for any $s>r$. Hence, $\Omega$ is $r$-convex by Lemma \ref{rconv:sequence}.
\end{proof}
\renewcommand{\radbig}{R}
\renewcommand{\radsmall}{r}
\begin{Prop}
Let $\radsmall>0$ and $\Omega \subseteq \hil$ be closed and convex and
assume the following.
\begin{asparaenum}[(A)]
\setcounter{enumi}{3}
\item
\label{eq:sequence:circle}
There exists a sequence $(\varepsilon_i)_{i \in \N}$ of positive real
numbers converging to $0$ such that for all $x \in \Omega$ and all $i \in \N$ it holds
$\delta^\circ_{\Omega,x}(\varepsilon_i) \geq
r-\sqrt{r^2-\varepsilon_i^2/4}$.
\end{asparaenum}
Then $\Omega$ is $\radsmall$-convex.
\label{thm:delta:circ}
\end{Prop}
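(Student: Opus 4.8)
The plan is to show that hypothesis \ref{eq:sequence:circle} forces property \ref{eq:sequence:arcprop} of Proposition \ref{sequencearcprop} (with radius $\radsmall$), after which $\radsmall$-convexity of $\Omega$ follows immediately from that proposition, since $\Omega$ is already assumed closed and connected (convexity implies connectedness). So the whole task reduces to a translation between the ``normal-direction'' modulus $\delta^\circ_{\Omega,x}$ and the ``boundary of the lens'' formulation. Fix the sequence $(\eps_i)$ from \ref{eq:sequence:circle}; I will show that for each $i$ and each pair $x,y\in\Omega$ with $\|x-y\|=\eps_i$, the boundary of the set $L$ on the left-hand side of \ref{linse} lies in $\Omega$.

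The key geometric step is the following identification. Work in the two-dimensional affine plane $P$ spanned by $x$, $y$ and a prescribed direction; by convexity of $\Omega$, the intersection $\Omega\cap P$ is a closed convex planar set, and it suffices to argue there. Write $m=(x+y)/2$ and let $v$ be a unit vector in $P$ with $\innerProd{v}{x-y}=0$. When $\eps_i\le 2\radsmall$, the set $L$ in the plane $P$ is exactly the ``lens'' bounded by two short arcs of radius $\radsmall$ joining $x$ and $y$, lying on opposite sides of the line through $x$ and $y$; the point of $L$ on the ray $m+[0,\infty)v$ farthest from $m$ is $m+\delta\cdot v$ with $\delta=\delta_{\ball(0,\radsmall)}(\eps_i)=\radsmall-\sqrt{\radsmall^2-\eps_i^2/4}$, using the formula for $\delta_{\ball(0,\radsmall)}$ recalled at the start of Section \ref{s:prelims}. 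By hypothesis \ref{eq:sequence:circle}, $\delta^\circ_{\Omega,x}(\eps_i)\ge\delta$, so $m+\delta\cdot v\in\Omega$, and this holds for \emph{every} admissible unit normal direction $v$ in every such plane. Thus every extreme point of the arc $\partial L$ (relative to $m$, in each two-dimensional slice) belongs to $\Omega$; since $\Omega\cap P$ is convex and $\partial L$ consists of two short arcs, a short-arc is the boundary of the convex set $L$ and is recovered as the convex hull boundary of these extreme points $m+\delta v$ as $v$ ranges over the relevant half-circle — hence the whole arc, and so all of $\partial L$, lies in $\Omega$. When $\eps_i>2\radsmall$ there is no ball containing both $x$ and $y$, so $L=\hil$; but then $\Omega$ must already equal $\hil$ (this is the phenomenon noted after Proposition \ref{Reissigprop} and used in its proof), and there is nothing to prove, or one simply discards the finitely-many such indices and passes to a tail of the sequence $(\eps_i)$, which still converges to $0$.

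With property \ref{eq:sequence:arcprop} established for the sequence $(\eps_i)$ (or a tail thereof), Proposition \ref{sequencearcprop} yields that $\Omega$ is $\radsmall$-convex, completing the proof.

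I expect the main obstacle to be the second step: carefully justifying that containment of the single ``apex'' point $m+\delta v$ for each direction $v$, together with convexity of the planar slice $\Omega\cap P$, actually forces the entire short arc bounding $L$ into $\Omega$. The subtlety is that $\delta^\circ_{\Omega,x}$ controls only the midpoint of a short arc, not the arc itself; one must exploit that the midpoints, taken over all valid planes $P$ through $x$, sweep out exactly the boundary arcs of the convex lens $L$, and that a convex planar set containing a convex arc's extreme points contains the arc. Handling the degenerate direction (where $v$ is not well-defined because the relevant plane is not two-dimensional, i.e. $x=y$) and the large-distance case $\eps_i>2\radsmall$ are minor but must be dispatched explicitly, as is the reduction that lets us assume throughout that $\Omega\ne\hil$ and $\Omega$ is not a singleton.
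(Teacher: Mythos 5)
There is a genuine gap, and it sits exactly where you yourself flagged the ``main obstacle'': the passage from containment of the apex points $m+\delta v$ to containment of the whole boundary arc of the radius-$r$ lens. For a fixed pair $x,y$ with $\|x-y\|=\eps_i$, hypothesis \ref{eq:sequence:circle} places in $\Omega$ only the points $m+\delta v$ with $v$ a unit vector orthogonal to $x-y$ and $\delta = r-\sqrt{r^2-\eps_i^2/4}$. These points form the \emph{equator} of the lens boundary; in a fixed two-dimensional slice $P$ they are just the two apexes $m\pm\delta v$. Your claim that ``the midpoints, taken over all valid planes $P$ through $x$, sweep out exactly the boundary arcs'' is false: every such midpoint lies in the hyperplane through $m$ orthogonal to $x-y$, whereas each boundary arc runs from $x$ to $y$ and meets that hyperplane only at its apex. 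Consequently convexity yields only $\conv\bigl(\{x,y\}\cup\{m+\delta v \,|\, \|v\|=1,\ \innerProd{v}{x-y}=0\}\bigr)\subseteq\Omega$, a bicone strictly smaller than the lens; the arcs bulge outside it, so property \ref{eq:sequence:arcprop} for radius $r$ does not follow, and no single application of the hypothesis can give it.

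This is precisely why the paper's proof is a two-stage bootstrap rather than a one-shot reduction. First (Lemma \ref{Lemma:delta:circ}) one circumscribes: the circle through $x$ and $y$ that is tangent at $x$ to the segment $\conv\{x,a\}$, where $a=m+\delta v\in\Omega$, has radius $\rho\le 2r\sqrt{1+4\eps^2}$, and its short arc lies inside the triangle $\conv\{x,y,a\}\subseteq\Omega$; since flatter arcs lie in the convex hull of sharper ones, this verifies property \ref{eq:sequence:arcprop} for every radius $s>2r$, whence $\Omega$ is $2r$-convex by Proposition \ref{sequencearcprop} and Lemma \ref{rconv:sequence}. Then, once $\Omega$ is known to be $R$-convex, the tangent segment is replaced by the tangent to the short arc of radius $R$ joining $x$ and $a$ (now known to lie in $\Omega$), which improves the circumscribed radius to essentially $\frac{2R}{1+R/r}$; iterating $R_{n+1}=\frac{2R_n}{1+R_n/r}$ from $R_0=2r$ converges to $r$, and Lemma \ref{rconv:sequence} concludes. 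Your outer frame (reduce to Proposition \ref{sequencearcprop}, discard the finitely many indices with $\eps_i>2r$, dispatch degenerate cases) matches the paper, but the geometric core that makes the radius $r$ attainable is missing and cannot be repaired along the lines you propose.
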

To prove the previous statement we adopt the central idea of the proof of
Theorem 2.1 in \cite{BalashovRepovs11b} to our weakened hypotheses.
We begin with a weaker version of Proposition \ref{thm:delta:circ}.
\begin{Lemma}
Let $r>0$ and $\Omega \subseteq \hil$ be closed and convex and assume \ref{eq:sequence:circle}.
Then $\Omega$ is $2r$-convex.
\label{Lemma:delta:circ}
\end{Lemma}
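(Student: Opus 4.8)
The plan is to exploit the hypothesis \ref{eq:sequence:circle} by feeding it into Proposition \ref{sequencearcprop} with radius $2r$, since that proposition only requires a sequential version of the arc property for the \emph{boundary} of the lens. First I would fix $x, y \in \Omega$ with $\|x-y\| = \varepsilon_i$ for some $i \in \N$, reduce to the two-dimensional affine plane spanned by $x$, $y$ and a short arc of radius $2r$ joining them, and set $x + y = 0$ as usual. I would then use convexity of $\Omega$ together with the hypothesis $\delta^\circ_{\Omega,x}(\varepsilon_i) \geq r - \sqrt{r^2 - \varepsilon_i^2/4}$ to locate a point of $\Omega$ at distance at least $r - \sqrt{r^2 - \varepsilon_i^2/4}$ from the segment midpoint, in the direction orthogonal to $x - y$. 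The key geometric computation is that $r - \sqrt{r^2 - \varepsilon_i^2/4}$ is exactly the sagitta (midpoint height) of the short arc of radius $2r$ over a chord of length $\varepsilon_i$: indeed $\delta_{\ball(0,2r)}(\varepsilon_i) = 2r - \sqrt{4r^2 - \varepsilon_i^2/4}$, but the relevant quantity here, after the halving that is built into the definition of $\delta^\circ$ (the point is $(x+y)/2 + \delta v$, and the chord from $x$ to $y$ has length $\varepsilon_i$ while the arc has radius $2r$)\,—\,I would verify that the threshold $r - \sqrt{r^2 - \varepsilon_i^2/4}$ matches what is needed so that the midpoint of the radius-$2r$ short arc lies in $\Omega$ or below the witnessed point.

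Concretely, the main step is: show that the midpoint $z_i$ of \emph{every} short arc of radius $2r$ joining $x$ and $y$ (with $\|x-y\|=\varepsilon_i$) lies in $\Omega$. By hypothesis there is a point $w \in \Omega$ of the form $w = (x+y)/2 + \delta v$ with $\delta \geq r - \sqrt{r^2 - \varepsilon_i^2/4}$ and $v \perp x-y$, $\|v\|=1$. Since $\Omega$ is convex and contains $x$, $y$, $w$, it contains the triangle $\conv\{x,y,w\}$; the midpoint of the radius-$2r$ short arc over the chord $[x,y]$ is at orthogonal height equal to the sagitta of that arc, and a short computation with the circle equation shows this sagitta is at most $\delta$, so $z_i$ lies inside the triangle and hence in $\Omega$. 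Once the midpoints of all radius-$2r$ short arcs over every chord of length $\varepsilon_i$ lie in $\Omega$, I would run the now-standard bisection argument (exactly as in the proof of implication \ref{p:globalchar:v} $\Rightarrow$ \ref{p:globalchar:i} in Proposition \ref{p:globalchar}): given any point $a$ on such an arc $A$, construct sequences $(p_n), (q_n)$ on $A$ with $p_n, q_n \to a$, each step taking a midpoint, so that $a \in \Omega$ by closedness. Care is needed because the chord lengths of the sub-arcs $A_{(p_n,q_n)}$ are not of the form $\varepsilon_j$\,—\,but this is precisely the situation Proposition \ref{sequencearcprop} is designed to handle, so I would instead phrase the conclusion as: the boundary of the radius-$2r$ lens over any chord of length $\varepsilon_i$ is contained in $\Omega$, which is property \ref{eq:sequence:arcprop} for radius $2r$, and invoke Proposition \ref{sequencearcprop} directly.

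Thus the skeleton is: (1) reduce to the plane and normalize; (2) from \ref{eq:sequence:circle} and convexity, produce a witness point $w \in \Omega$ at the required orthogonal height; (3) verify the sagitta inequality so that $\conv\{x,y,w\} \supseteq$ the radius-$2r$ short arc's midpoint, and more generally that the whole boundary of the radius-$2r$ lens over $[x,y]$ is caught inside a union of such triangles (using that $\Omega$ is convex and both short arcs of the lens pass through their respective midpoints, which lie in $\Omega$ by the same argument applied to every chord of length $\varepsilon_i$ inside the lens); (4) conclude that $\Omega$ has property \ref{eq:sequence:arcprop} with radius $2r$ and apply Proposition \ref{sequencearcprop}. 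The main obstacle I anticipate is step (3): showing that convexity of $\Omega$ plus control of the height only at the single scale $\varepsilon_i$ actually forces the \emph{entire} boundary of the radius-$2r$ lens into $\Omega$, rather than just its two midpoints. The resolution is that the lens boundary consists of two radius-$2r$ short arcs, and for each of those arcs one can again apply hypothesis \ref{eq:sequence:circle} at points of $\Omega$ lying on it at mutual distance $\varepsilon_i$\,—\,filling in intermediate arc-midpoints\,—\,combined with convexity; making this filling argument precise and confirming it terminates (covers the arc) is where the real work lies, and it should mirror the inductive covering arguments already carried out in the proofs of Propositions \ref{GuntherLemma} and \ref{sequencearcprop}.
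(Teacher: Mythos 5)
Your step (2) and the midpoint computation are sound: since $\rho \mapsto \rho - \sqrt{\rho^2 - \varepsilon_i^2/4}$ is decreasing in $\rho$, the sagitta of the radius-$2r$ short arc over a chord of length $\varepsilon_i$ is at most $\delta_i := r-\sqrt{r^2-\varepsilon_i^2/4}$, so that arc's midpoint lies on the segment joining $(x+y)/2$ to the witness $a=(x+y)/2+\delta_i v$ and hence in $\Omega$ by convexity. The genuine gap is the passage from the midpoint to the whole arc, and neither of your proposed resolutions closes it. The bisection of Proposition \ref{p:globalchar} fails for exactly the reason you name: the chords of the sub-arcs do not have lengths in the sequence $(\varepsilon_j)$. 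The ``filling'' argument cannot even start: on a short arc over a chord of length $\varepsilon_i$ the endpoints are the \emph{only} pair of points at mutual distance $\varepsilon_i$ (every other pair is strictly closer), and for smaller $\varepsilon_j$ the hypothesis \ref{eq:sequence:circle} requires both points of the pair to lie in $\Omega$ already, which is what is to be proved. A region-growing argument of the kind you gesture at is essentially the iteration carried out in the proof of Proposition \ref{thm:delta:circ}, whose base case is this very lemma, so it cannot be invoked here.

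The missing idea --- and the true source of the factor $2$ --- is that the \emph{entire} short arc of radius $s$ joining $x$ and $y$, for any $s>2r$ and $i$ large, already lies in the triangle $\conv\{x,y,a\}$, so no filling or bisection is needed at all. Indeed, in the plane of $x,y,a$ with $x+y=0$, the circle through $x$ and $y$ that is tangent at $x$ to the line through $x$ and $a$ has its center $c$ on the perpendicular bisector of $\conv\{x,y\}$ and radius $\|c-x\|=\tfrac{\varepsilon_i}{2}\sqrt{2r/\delta_i}\le 2r$ (using $\delta_i\ge\varepsilon_i^2/(8r)$); its short arc over $\conv\{x,y\}$ is contained in the triangle because the circle stays on the side of each of its two tangent lines containing $c$, and every flatter arc of radius $s\ge\|c-x\|$ lies between that arc and the chord. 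Since the unit vector $v$ was arbitrary, the whole boundary of the radius-$s$ lens over $\conv\{x,y\}$ lies in $\Omega$, property \ref{eq:sequence:arcprop} holds for every radius $s>2r$, and Proposition \ref{sequencearcprop} together with Lemma \ref{rconv:sequence} yields $2r$-convexity. This single tangency computation, which is how the paper argues, replaces all of your step (3).
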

\begin{proof}
Define $\delta_i:=r-\sqrt{r^2-\varepsilon_i^2/4}$. Let $\varepsilon > 0$ and let $i \in \N$ be sufficiently large that $\delta_i/\varepsilon_i < \eps$ and $\eps_i < 2r$. Let $x, y \in \Omega$, $\|x-y\| = \varepsilon_i$. Define $a := (x+y)/2 + \delta_i \cdot v$ for arbitrary $v \in \partial B(0,1)$ such that $\innerProd{v}{x-y}= 0$. By hypothesis, $a \in \Omega$. Without loss of generality, we reduce our arguments to the two-dimensional plane containing $\conv\{x,y,a\} \subseteq \Omega$, and assume $x+y=0$. Let $c$ denote the intersection of the line $\{\lambda a \, | \, \lambda \in \R \}$ with the line normal to $a-x$ through $x$. Then abbreviating $\mu := \eps_i/2$ we have $$\mu^2  = \delta_i \cdot \sqrt{\|c-x\|^2-\mu^2}$$ by Pythagorean theorem. So $\delta_i^2 \|c-x\|^2 =\mu^4 + \delta_i^2 \mu^2 = \mu^4\cdot(1+\delta_i^2/\mu^2).$
Hence,
\gll
\|c-x\| = \frac{\sqrt{1+(2\delta_i/\varepsilon_i)^2}}{4 \cdot \delta_i/\varepsilon_i^2} \leq \frac{\sqrt{1+4\eps^2}}{4 \cdot \delta_i/\varepsilon_i^2} \leq  {2r\cdot \sqrt{1+4 \eps^2}} =:s.
\gle
Hence, any short arc of radius $s$ joining $x$ and $y$ is contained in $\Omega$. Thus, $\Omega$ possesses property \ref{eq:sequence:arcprop} of Proposition \ref{sequencearcprop} with any radius $s > 2r$ at place of $r$. So $\Omega$ is $2r$-convex by Proposition \ref{sequencearcprop} and Lemma \ref{rconv:sequence} using the limit $\eps \to 0$.
\end{proof}
\renewcommand{\radbig}{R}
\renewcommand{\radsmall}{r}
\begin{proof}[Proof of Proposition \ref{thm:delta:circ}] As a first step $\Omega$ is proved to be $\frac{2\radbig}{1+\radbig / \radsmall}$-convex provided that $\Omega$ is $\radbig$-convex, $\radbig > \radsmall$. Without loss of generality, let $x,y \in \Omega$ such that $2r>\|x-y\| = \varepsilon_i=:\varepsilon$ for some $i \in \N$. Define $\delta:=\radsmall-\sqrt{\radsmall^2-\varepsilon^2/4}$ and $a := (x+y)/2 + \delta \cdot v$ for arbitrary %
$\unitvector$ such that $\innerProd{v}{x-y}= 0$. Property \ref{eq:sequence:circle} ensures $a \in \Omega$. Without loss of generality, we reduce our arguments to the two-dimensional plane containing $\conv\{x,y,a\} \subseteq \Omega$, and assume $x+y=0$. Define $e_1=-x$ and $e_2=a$. By hypothesis, $\conv\{A_{(x,a)},A_{(-x,a)}\} \subseteq \Omega$ where $A_{(\cdot)}$ denotes the short arc of radius $\radbig$ joining $a$ to $x$ and $-x$, respectively, such that the center of the corresponding sphere has second coordinate less than $0$. So $A_{(x,a)}$ is a subset of $\partial B(c',\radbig)$ where $c'$ has positive first coordinate since $\radbig>\radsmall$. Denote by $c$ the unique point of $\{\lambda e_2 | \lambda \in \R\} \cap \{ x+ \lambda (x-c') | \lambda \in \R \}$, and $\rho:=\|x-c\|$. Let $z=a-x$. Let $l_x$ and $l_a$ be the lines tangent to $B(c',\radbig)$ passing through $x$ and $a$, respectively. Choose $w\in \hil$ such that $\innerProd{w}{e_2}>0$ and $l_x = \{ x+\lambda w | \lambda \in \R\}$. For the angle between $e_1$ and $z$ we write $\alpha$, and for the angle between $z$ and $w$ we write $\beta$. Then, we note that the angle between $-e_2$ and $c'-a$ equals $\alpha - \beta$. The angle between $e_2$ and $x-c$ equals $\alpha + \beta$. By hypothesis, $\sin(\alpha \pm \beta) \neq 0$. Applying the sine law to the triangle with vertices $c$, $c'$ and $a$, we have
\gll
\frac{\radbig-\rho}{\sin({\alpha - \beta})} = \frac{\radbig}{\sin({\pi - (\alpha + \beta)})} = \frac{\radbig}{\sin(\alpha + \beta)}.
\gle
Thus,
\begin{equation}
\frac{\radbig-\rho}{\radbig} = \frac{\sin(\alpha-\beta)}{\sin(\alpha + \beta)}.
\label{singl}
\end{equation}
Moreover, $\sin(\alpha + \beta) = \varepsilon/(2 \rho)$ and
\gll
\sin(\alpha - \beta) = \sin \alpha \cos \beta - \cos \alpha \sin \beta = \frac{\delta}{\|z\|} \cdot \frac{\sqrt{\radbig^2-({\|z\|}/2)^2}}\radbig - \frac{\eps}{2{\|z\|}} \cdot \frac{{\|z\|}}{2\radbig}. 
\gle
Using previous identities together with \ref{singl} we get
\gll
\radbig - \rho = \frac{ 2\rho}{\eps} \cdot \left (\delta \sqrt{ \frac{\radbig^2}{\|z\|^2} - \frac{1}{4}} - \frac{\eps}{4} \right ) = \rho \cdot \left ( \frac{2\delta}{\eps^2} \sqrt{\frac{\radbig^2}{(\|z\|/\eps)^2}-\frac{\eps^2}{4}} - \frac{1}{2} \right ).
\gle
Solving the previous identity for $\rho$ together with $\|z\|^2 = \delta^2 + \eps^2/4$ leads to $\rho = {\radbig}\cdot(\frac{1}{2} + 2D(\eps))^{-1}$
where 
\gll
D(\eps) = \frac{\delta}{\varepsilon^2} \cdot \sqrt{\frac{\radbig^2}{ \left (\frac{\delta}{\varepsilon}  \right )^2 + \frac{1}{4} }-\frac{\varepsilon^2}{4}}.
\gle
For arbitrary $\eta>0$ there exists $j_0 \in \N$ such that for all $j\geq j_0$ it holds that
\gll
D(\eps_j) \geq \frac{\radbig}{4\radsmall}- \frac{\eta}{4}.
\gle
This follows by straightforward computation using the elementary estimate $\sqrt{\tau} - \sqrt{\sigma} \leq \sqrt{\tau-\sigma}$ for $0\leq \sigma \leq \tau$. 
Hence, $\rho \leq \frac{2\radbig}{1+\radbig/\radsmall-\eta}$ and therefore any short arc of radius $\frac{2\radbig}{1+\radbig/\radsmall-\eta}$ joining $x$ and $y$ is contained in $\Omega$ if $\|x-y\|=\eps_j$. By Proposition \ref{sequencearcprop} and Lemma \ref{rconv:sequence} this implies the claim in the limit $\eta \to 0$.
By Lemma \ref{Lemma:delta:circ}, $\Omega$ is $\radbig_0$-convex where $\radbig_0=2r$. So inductively, $\Omega$ is $\radbig_{n+1}$-convex for $\radbig_{n+1} = \frac{2\radbig_{n}}{1+\radbig_{n}/r}$ and any $n \in \N$. Now $\Omega$ is $r$-convex by Lemma \ref{rconv:sequence} as $\lim_{n \to \infty} \radbig_n = r$.
\end{proof}
Next, the limit inferior in \ref{limgl} will be replaced by the limit in order to verify Theorem \ref{existlim}.
\begin{proof}[Proof of Theorem \ref{existlim}]
Without loss of generality, let $\Omega$ consist of more than one point. Assume the nonexistence of the limit, that is, $\kappa <\limsup_{\eps>0, \varepsilon \to 0} \Quot{\varepsilon}$ for $\kappa := \liminf_{\eps>0, \varepsilon\to 0} \Quot{\varepsilon}$. Then there exists a sequence $(\varepsilon_i)_{i \in \N}$ converging to 0 such that 
\gll
\kappa < \lim_{i \to \infty} \quot{\varepsilon_i}.
\gle
It follows that there exists $s>0$ such that $\Quot{\eps_i} \geq \fracquot{8s}> \kappa$ for sufficiently large $i$. For any $s_0 > s$ and sufficiently large $i$, we have 
\gll
\inf_{x \in \Omega} \delta^\circ_{\Omega,x}(\varepsilon_i)\geq \delta_\Omega(\eps_i)\geq \frac{\eps_i^2}{8s} \geq s_0 - \sqrt{s_0^2-\eps_i^2/4}.
\gle
So property \ref{eq:sequence:circle} of Proposition \ref{thm:delta:circ} is fulfilled for any radius $s_0 >s$, so $\Omega$ is $s$-convex. Then $\kappa \geq 1/(8s)$, which is a contradiction. The limit is indeed finite: Suppose the limit equals $\infty$. Then $\Omega$ is $r$-convex for any $r>0$ by Proposition \ref{strongconvexityliminf}. Hence, by Proposition \ref{p:globalchar}, either $\Omega$ is a singleton or $\Omega = X$, which is a contradiction.
\end{proof}
We note that once the existence of the limit has been established, its finiteness is also implied by the following result from \cite{BalashovRepovs09}, which holds in the more general setting of a uniformly convex Banach space $X$: If $\Omega \subseteq X$ is closed and uniformly convex, and $\Omega \neq X$, then $\delta_\Omega(\varepsilon) \leq C \cdot \varepsilon^2$ for some $C \in \R$ and all sufficiently small $\varepsilon>0$.
\begin{proof}[Proof of Theorem \ref{localmainthm}]
The equivalence of \ref{localmainthm:i} and \ref{localmainthm:ii} follows from Proposition \ref{strongconvexityliminf}. Left to prove is \ref{localmainthm:iv} $\fol$ \ref{localmainthm:i} since \ref{localmainthm:ii} $\fol$  \ref{localmainthm:iv} is obvious. Let $x \in \Omega$ and $s> r$. By hypothesis it holds that $\delta_{\Omega,x}^\circ(\eps) \geq s-\sqrt{s^2-\eps^2/4}$ for all $\eps>0$ sufficiently small. From Proposition \ref{thm:delta:circ} we conclude that $\Omega$ is $s$-convex, and hence the assertion follows from Lemma \ref{rconv:sequence}.
\end{proof}
\appendix
\section*{Acknowledgment}
The authors thank M.V. Balashov (Moscow), J. Gwinner (Neubiberg) and A. Hinrichs (Jena) for their valuable comments on an earlier version of this paper.
%
%\section*{References} %
%
%
%
\bibliographystyle{elsarticle-harv}
\bibliography{preambles,mrabbrev,strings,fremde,eigeneJOURNALS,stronglyconv12}

\def\ocirc#1{\ifmmode\setbox0=\hbox{$#1$}\dimen0=\ht0 \advance\dimen0
  by1pt\rlap{\hbox to\wd0{\hss\raise\dimen0
  \hbox{\hskip.2em$\scriptscriptstyle\circ$}\hss}}#1\else {\accent"17 #1}\fi}
  \def\cprime{$'$} \ifx\hyperbaseurl\undefined
  \def\href#1#2{#2}\def\url#1{\texttt{#1}} \fi
  \ifx\ExplicitURLsInBibTeX\undefined\relax\else
  \def\href#1#2{www.reiszig.de/gunther/#1}\def\url#1{\texttt{#1}} \fi
  \ifx\hyperbaseurl\undefined\def\href#1#2{#2}\def\url#1{\texttt{#1}} \fi
  \def\cprime{$'$}
\begin{thebibliography}{18}
\expandafter\ifx\csname natexlab\endcsname\relax\def\natexlab#1{#1}\fi
\expandafter\ifx\csname url\endcsname\relax
  \def\url#1{\texttt{#1}}\fi
\expandafter\ifx\csname urlprefix\endcsname\relax\def\urlprefix{URL }\fi

\bibitem[{Balashov and Repov{\v{s}}(2009)}]{BalashovRepovs09}
Balashov, M.~V., Repov{\v{s}}, D., 2009. Uniform convexity and the splitting
  problem for selections. J. Math. Anal. Appl. 360~(1), 307--316.
\newline\urlprefix\url{http://dx.doi.org/10.1016/j.jmaa.2009.06.045}

\bibitem[{Balashov and Repov{\v{s}}(2011)}]{BalashovRepovs11b}
Balashov, M.~V., Repov{\v{s}}, D., 2011. Uniformly convex subsets of the
  {H}ilbert space with modulus of convexity of the second order. J. Math. Anal.
  Appl. 377~(2), 754--761.
\newline\urlprefix\url{http://dx.doi.org/10.1016/j.jmaa.2010.11.046}

\bibitem[{Blanc(1943)}]{Blanc43}
Blanc, E., 1943. Les ensembles surconvexes plans. Ann. Sci. \'Ecole Norm. Sup.
  (3) 60, 215--246.

\bibitem[{Diestel(1975)}]{Diestel75}
Diestel, J., 1975. Geometry of {B}anach spaces---selected topics. Vol. 485 of
  Lect. Notes Math. Springer-Verlag, Berlin.

\bibitem[{Frankowska and Olech(1981)}]{FrankowskaOlech81}
Frankowska, H., Olech, C., 1981. {$R$}-convexity of the integral of set-valued
  functions. In: Contributions to analysis and geometry ({B}altimore, {M}d.,
  U.S.A., 1980, Suppl. Amer. J. Math.). Johns Hopkins Univ. Press, Baltimore,
  Md., U.S.A, pp. 117--129.

\bibitem[{Ioffe and Tihomirov(1979)}]{Ioffe}
Ioffe, A.~D., Tihomirov, V.~M., 1979. Theory of extremal problems. Vol.~6 of
  Studies in Mathematics and its Applications. North-Holland Publishing Co.,
  Amsterdam, translated from the Russian by Karol Makowski.

\bibitem[{Ivanov and Polovinkin(1995)}]{IvanovPolovinkin95}
Ivanov, G.~E., Polovinkin, E.~S., 1995. On strongly convex linear differential
  games. Differ. Uravn. 31~(10), 1641--1648, 1773 (1996).

\bibitem[{Mayer(1935)}]{Mayer35}
Mayer, A.~E., 1935. Eine \"{U}berkonvexit\"at. Math. Z. 39~(1), 511--531.

\bibitem[{Nakajima(1928)}]{Nakajima1928}
Nakajima, S., 1928. {\"U}ber konvexe {K}urven und {F}l{\"a}chen. T\^ohoku Math.
  J. 29, 227--230.

\bibitem[{Pli{\'s}(1975)}]{Plis75}
Pli{\'s}, A., 1975. Uniqueness of optimal trajectories for non-linear control
  systems. Ann. Polon. Math. 29~(4), 397--401, collection of articles dedicated
  to the memory of Tadeusz Wa{\.z}ewski, IV.

\bibitem[{Polovinkin and Balashov(2004)}]{PolovinkinBalashov04}
Polovinkin, E.~S., Balashov, M.~V., 2004. Elements of convex and strongly
  convex analysis. Moskva: Fizmatlit, (Russian).

\bibitem[{Polyak(1966)}]{Polyak66}
Polyak, B.~T., 1966. Existence theorems and convergence of minimizing sequences
  for extremal problems with constraints. Dokl. Akad. Nauk SSSR 166~(2),
  287--290, (Russian. Engl. transl. in Soviet Math. Dokl., vol. 7, 1966,
  72-75).

\bibitem[{Rei{\ss}ig(2007)}]{i07Convex}
G. Rei{\ss}ig, Convexity of reachable sets of nonlinear ordinary
  differential equations. Automat. Remote Control 68~(9)~(2007) 1527--1543. {R}ussian
  Transl. in Avtomat. i Telemekh. (9)~(2007) 64--78.
\newline\urlprefix\url{http://dx.doi.org/10.1134/S000511790709007X}, \url{http://arxiv.org/abs/1211.6080}.

\bibitem[{Rei{\ss}ig(2011)}]{i11abs}
G. Rei{\ss}ig, Computing abstractions of nonlinear systems. IEEE
  Trans. Automat. Control 56~(11)~(2011) 2583--2598.
\newline\urlprefix\url{http://dx.doi.org/10.1109/TAC.2011.2118950}, \url{http://arxiv.org/abs/0910.2187}.

\bibitem[{Tietze(1928)}]{Tietze1928}
Tietze, H., 1928. \"{U}ber {K}onvexheit im kleinen und im gro{\ss}en und \"uber
  gewisse den {P}unkten einer {M}enge zugeordnete {D}imensionszahlen. Math. Z.
  28~(1), 697--707.

\bibitem[{Tietze(1929)}]{Tietze1929}
Tietze, H., 1929. Bemerkungen {\"u}ber konvexe und nicht-konvexe {F}iguren. J.
  Reine Angew. Math. 160, 67--69.

\bibitem[{Valentine(1964)}]{Valentine64}
Valentine, F.~A., 1964. Convex sets. McGraw-Hill Series in Higher Mathematics.
  McGraw-Hill Book Co., New York, {G}erman transl.: "Konvexe {M}engen", Bibl.
  Inst., Mannheim, 1968.

\bibitem[{van~de Vel(1993)}]{VanDeVel93}
van~de Vel, M. L.~J., 1993. Theory of convex structures. Vol.~50 of
  North-Holland Mathematical Library. North-Holland Publishing Co., Amsterdam.

\end{thebibliography}
\end{document}